\documentclass[11pt]{amsart}

\usepackage{amsmath,amsthm,amssymb}
\usepackage[margin=1in]{geometry}
\usepackage[bookmarks]{hyperref}
\usepackage[all,cmtip]{xy}
\usepackage{verbatim}
\usepackage{todonotes}

\newenvironment{enumalph}{\begin{enumerate}  }{\end{enumerate}}

\newcommand{\subgrp}[1]{\langle #1 \rangle}
\newcommand{\set}[1]{\left\{ #1 \right\}}

\newtheorem{theorem}{Theorem}[subsection]
\newtheorem{proposition}[theorem]{Proposition}

\newtheorem{corollary}[theorem]{Corollary}
\newtheorem{lemma}[theorem]{Lemma}

\theoremstyle{definition}

\newtheorem{example}[theorem]{Example}

\newtheorem{remark}[theorem]{Remark}

\newcommand{\upA}{\mathsf{A}}

\newcommand{\C}{\mathbb{C}}
\newcommand{\E}{\mathbb{E}}
\newcommand{\F}{\mathbb{F}}
\newcommand{\N}{\mathbb{N}}

\newcommand{\R}{\mathbb{R}}
\newcommand{\U}{\mathbb{U}}
\newcommand{\Z}{\mathbb{Z}}

\newcommand{\calA}{\mathcal{A}}
\newcommand{\calB}{\mathcal{B}}
\newcommand{\calU}{\mathcal{U}}
\newcommand{\calZ}{\mathcal{Z}}

\newcommand{\frakb}{\mathfrak{b}}
\newcommand{\g}{\mathfrak{g}}

\newcommand{\frakl}{\mathfrak{l}}

\newcommand{\frakp}{\mathfrak{p}}
\newcommand{\fraku}{\mathfrak{u}}

\DeclareMathOperator{\ch}{ch}

\DeclareMathOperator{\Dist}{Dist}

\DeclareMathOperator{\Ext}{Ext}

\DeclareMathOperator{\gr}{gr}
\DeclareMathOperator{\opH}{H}
\newcommand{\Hbul}{\opH^\bullet}
\DeclareMathOperator{\Hom}{Hom}
\DeclareMathOperator{\im}{im}
\DeclareMathOperator{\ind}{ind}
\DeclareMathOperator{\Lie}{Lie}

\newcommand{\CZbar}{\overline{C}_\Z}
\newcommand{\Fp}{\F_p}

\newcommand{\Ua}{U_\upA}

\newcommand{\Uc}{U_\C}
\newcommand{\Uq}{\U_q}
\newcommand{\Uqg}{\U_q(\g)}

\newcommand{\Uz}{U_\zeta}
\newcommand{\Uzb}{\Uz(\frakb)}
\newcommand{\Uzg}{\Uz(\g)}
\newcommand{\Uzo}{\Uz^0}
\newcommand{\Uzlj}{\Uz(\frakl_J)}
\newcommand{\Uzpj}{\Uz(\frakp_J)}

\newcommand{\cua}{\calU_\upA}
\newcommand{\cuc}{\calU_\C}
\newcommand{\cuz}{\calU_\zeta}
\newcommand{\cuzg}{\cuz(\g)}
\newcommand{\cuzu}{\cuz(\fraku)}
\newcommand{\cuzb}{\cuz(\frakb)}
\newcommand{\cuzlj}{\cuz(\frakl_J)}
\newcommand{\cuzuj}{\cuz(\fraku_J)}

\newcommand{\uz}{u_\zeta}
\newcommand{\uzb}{\uz(\frakb)}
\newcommand{\uzg}{\uz(\g)}
\newcommand{\uzo}{\uz^0}
\newcommand{\uzlj}{\uz(\frakl_J)}
\newcommand{\uzpj}{\uz(\frakp_J)}
\newcommand{\uzu}{\uz(\fraku)}

\newcommand{\Lz}{L^\zeta}

\numberwithin{equation}{subsection}

\begin{document}

\title[Cohomology for infinitesimal unipotent algebraic and quantum groups]{Cohomology for infinitesimal unipotent \\ algebraic and quantum groups}

\author{Christopher M. Drupieski}
\address{Department of Mathematics\\ University of Georgia \\ Athens\\ GA~30602, USA}
\thanks{Research of the first author was supported in part by NSF VIGRE grant DMS-0738586}
\email{cdrup@math.uga.edu}

\author{Daniel K. Nakano}
\address{Department of Mathematics\\ University of Georgia \\ Athens\\ GA~30602, USA}
\thanks{Research of the second author was supported in part by NSF grant DMS-1002135}
\email{nakano@math.uga.edu}

\author{Nham V. Ngo}
\address{Department of Mathematics\\ University of Georgia \\ Athens\\ GA~30602, USA}
\thanks{Research of the third author was supported in part by NSF grant DMS-1002135}
\email{nham@uga.edu}



\begin{abstract}
In this paper we study the structure of cohomology spaces for the Frobenius kernels of unipotent and parabolic algebraic group schemes and of their quantum analogs. Given a simple algebraic group $G$, a parabolic subgroup $P_J$, and its unipotent radical $U_J$, we determine the ring structure of the cohomology ring $\opH^\bullet((U_J)_1,k)$. We also obtain new results on computing $\opH^\bullet((P_J)_1,L(\lambda))$ as an $L_J$-module where $L(\lambda)$ is a simple $G$-module with highest weight $\lambda$ in the closure of the bottom $p$-alcove. Finally, we provide generalizations of all our results to small quantum groups at a root of unity.
\end{abstract}

\maketitle

\section{Introduction}

\subsection{}

In the investigation of cohomology for general algebraic structures like groups and Lie algebras, it is important to understand the cohomology of unipotent and nilpotent algebraic structures. For example, it is well-known that the cohomology of a finite group in characteristic $p > 0$ embeds in the cohomology of any one of its $p$-Sylow subgroups. On the other hand, the cohomology of general $p$-groups is not well-understood, which makes the task of computing cohomology for finite groups challenging. In particular, it is not even known how to compute the cohomology of the subgroup of the finite general linear group $GL_n(\Fp)$ consisting of upper triangular unipotent matrices.

For the case of a Lie algebra over a field $k$ of characteristic $p > 0$, far more is known. For example, let $\g$ be the Lie algebra of a simple algebraic group $G$ over $k$, let $\frakp_J \subset \g$ be a standard parabolic subalgebra of $\g$ corresponding to a subset of simple roots $J$, and let $\fraku_J \subset \frakp_J$ be the nilradical of $\frakp_J$. If $p \geq h-1$ (where $h$ is the Coxeter number of $\g$), then an analog of Kostant's famous cohomology formula applies, and one can compute the ordinary Lie algebra cohomology $\opH^\bullet(\fraku_J,L(\lambda))$ with coefficients in a simple $G$-module $L(\lambda)$ having highest weight in the bottom $p$-alcove (cf.\ \cite{FP:1986,PT:2002,UGA:2009}). Moreover, in the case when $L(\lambda)$ is the trivial module $k$ and $J = \emptyset$, then the ring structure of $\opH^\bullet(\fraku,k)$ is also known.

In this paper we demonstrate how to compute cohomology for the infinitesimal Frobenius kernels of unipotent and parabolic algebraic group schemes and of their quantum analogs (i.e., small quantum groups). Our calculations include several new ideas and are highly dependent on the aforementioned calculations for ordinary Lie algebra cohomology and on analogous calculations for quantized enveloping algebras at an $\ell$-th root of unity (cf.\ \cite{UGA:2010}). To compute cohomology for the Frobenius kernels of parabolic algebraic group schemes, we also rely on deep geometric results concerning the vanishing of line bundle cohomology for the flag variety.

\subsection{Main results}

The paper is organized as follows; for a detailed explanation of the notation, see Section~\ref{subsection:notation}. Throughout, assume $p>h$, and let $U_J$ be the unipotent radical of the parabolic subgroup $P_J = L_J \ltimes U_J$ ($J$ a subset of simple roots). Write $(U_J)_1$ for the first Frobenius kernel of $U_J$. In Section \ref{section:UJ1cohomology}, we prove for $\lambda \in C_\Z$ that there exists an isomorphism of graded $L_J$-modules
\[
\opH^\bullet((U_J)_1,L(\lambda))\cong S^\bullet(\fraku_J^*)^{(1)} \otimes\opH^\bullet(\fraku_J,L(\lambda)).
\]
Here $S^\bullet(\fraku_J^*)$ is the symmetric algebra on $\fraku_J^* :=\Hom_k(\fraku_J,k)$, generated in degree two. Our result strengthens an observation previously made by Friedlander and Parshall \cite[Remark 2.7(b)]{FP:1986}. In Section \ref{section:UJ1ringstructure}, we prove for $p>2(h-1)$ that there exists a graded algebra isomorphism 
\[
\opH^\bullet(U_1,k) \cong S^\bullet(\fraku^*)^{(1)}\otimes \opH^\bullet(\fraku,k),
\]
where the algebra structure on the right-hand side is the ordinary tensor product of algebras. This computation answers a 25-year-old problem concerning the ring structure of $\opH^\bullet(U_1,k)$. As $T$-modules, this identification was obtained by Friedlander and Parshall in 1986 provided $p > h$ \cite[Corollary 2.6]{FP:1986}. To obtain the identification of graded algebras, however, the lower bound on $p > 2(h-1)$ cannot in general be improved, as seen from Example \ref{example:counterex}.\footnote{In her thesis, Crane \cite{Crane:1983} claims for all primes that if the underlying root system is of type $A_n$, then the cohomology ring $\opH^\bullet(U_1,k)$ is an integral extension of a polynomial algebra. For $p < h$ this is easily seen to be false using well-established results on the spectrum of the cohomology ring and support varieties. In this paper we verify Crane's claim for $p > 2(h-1)$.} In Section \ref{section:UJ1ringstructure} we also obtain for $p > 3(h-1)$ the ring structure of the cohomology ring $\opH^\bullet((U_J)_1,k)$ for arbitrary $J \subseteq \Delta$.

In Section \ref{section:paraboliccohomology}, we apply our calculations to obtain new results on the cohomology of $(P_J)_1$ with coefficients in a simple $G$-module having highest weight in the bottom $p$-alcove. One of the primary ingredients is the calculation for $p > h$ by Kumar, Lauritzen and Thomsen \cite{KLT:1999} of the cohomology of $G_1$ with coefficients in an induced module, which employs the existence of Frobenius splittings on the cotangent bundle of the flag variety. Our calculations for $(P_J)_1$ significantly extend the earlier calculations of Friedlander and Parshall \cite[Corollary 2.6 and Remark 2.7(b)]{FP:1986}.

Section \ref{section:quantumanalogs} contains quantum analogs of our results for infinitesimal Frobenius kernels. These results pertain to computing the cohomology of the small quantum groups $\uz(\frakp_J)$ and $\uz(\fraku_J)$ with parameter specialized to a primitive $\ell$-th root of unity $\zeta \in \C$. Different techniques are required here than for algebraic groups due to the lack of quantum analogs for various tools available in the study of algebraic groups. 

\subsection{Notation} \label{subsection:notation}

Let $k$ be an algebraically closed field of characteristic $p > 2$. Let $G$ be a simple, simply-connected algebraic group over $k$, defined and split over the prime field $\F_p$. Fix a maximal torus $T \subset G$, also split over $\Fp$, and let $\Phi$ be the root system of $T$ in $G$. Fix a set $\Delta = \{ \alpha_1,\ldots,\alpha_n \}$ of simple roots in $\Phi$, and let $\Phi^+$ be the corresponding set of positive roots.

Let $W$ be the Weyl group of $\Phi$; it is generated by the set of simple reflections $\set{s_\alpha: \alpha \in \Delta}$. Write $\ell: W \rightarrow \N$ for the standard length function on $W$, and let $w_0 \in W$ be the longest element. Let $(\cdot,\cdot)$ be the standard $W$-invariant inner product on the Euclidean space $\E := \Z \Phi \otimes_\Z \R$. Given $\alpha \in \Phi$, let $\alpha^\vee := 2\alpha/(\alpha,\alpha)$ be the corresponding coroot. Set $\alpha_0$ to be the highest short root of $\Phi$, and $\rho$ to be one-half the sum of all positive roots in $\Phi$. Then the Coxeter number of $\Phi$ is $h=(\rho,\alpha^\vee_0)+1$.

Let $X$ be the weight lattice of $\Phi$, defined by the $\Z$-span of the fundamental weights $\{ \omega_1,\ldots,\omega_n \}$, and let $X^+ \subset X$ be the set of dominant weights. The dot action of $W$ on $X$ is defined for $w \in W$ and $\lambda \in X$ by $w\cdot\lambda=w(\lambda+\rho)-\rho$. The bottom $p$-alcove and its closure are defined, respectively, by
\begin{align*}
C_\Z & := \set{ \lambda \in X : 0 < (\lambda+\rho, \beta^\vee) < p \; \text{ for all $\beta \in \Phi^+$}}, \\
\CZbar &:=\set{ \lambda \in X : 0 \leq (\lambda+\rho,\beta^\vee) \leq p \; \text{ for all $\beta \in \Phi^+$}}.
\end{align*}

Given $J \subseteq \Delta$, let $\Phi_J = \Z J \cap \Phi$ be the subroot system of $\Phi$ generated by $J$, and let $W_J \subseteq W$ be the standard parabolic subgroup generated by the set of simple reflections $\set{s_\beta: \beta \in J}$. Set $\Phi_J^+ = \Phi_J \cap \Phi^+$. The set of $J$-dominant weights is defined by $X_J^+ = \set{ \mu \in X: \forall \beta \in \Phi_J^+, (\mu,\beta^\vee) \in \N}$, and the set of $J$-restricted dominant weights is defined by $(X_J)_1 = \set{\mu \in X_J^+: \forall \beta \in J, (\mu,\beta^\vee) < p}$. Write $^JW$ for the set of the minimal length right coset representatives of $W_J$ in $W$. Then ${}^JW$ also satisfies $^JW=\{w\in W|w^{-1}(\Phi^+_J) \subseteq \Phi^+\}$.

Let $B \subset G$ be the Borel subgroup of $G$ containing $T$ and corresponding to $\Phi^+$, and let $U \subset B$ be the unipotent radical of $B$. Write $U^- \subset B^-$ for the opposite subgroups. Given $J \subseteq \Delta$, let $P_J$ be the standard parabolic subgroup of $G$ containing $B$ and corresponding to $J$, let $U_J$ be the unipotent radical of $P_J$, and let $L_J$ be the Levi factor of $P_J$. Then $P_J = L_J\ltimes U_J$. Set $\g = \Lie(G)$, the Lie algebra of $G$, $\frakb = \Lie(B)$, $\fraku = \Lie(U)$, $\frakp_J = \Lie(P_J)$, $\fraku_J = \Lie(U_J)$, and $\frakl_J = \Lie(L_J)$.

Let $F : G \rightarrow G$ be the Frobenius morphism of $G$. Given a closed $F$-stable subgroup (scheme) $H$ of $G$, write $H_1$ for the scheme-theoretic kernel of the morphism $F|_H: H \rightarrow H$ (i.e., the first Frobenius kernel of $H$). Given a rational $H$-module $M$, write $M^{(1)}$ for the module obtained by twisting the structure map for $M$ by $F$. Alternately, if $F$ induces an isomorphism $H/H_1 \cong H$ (a condition that is satisfied for all of the algebraic groups mentioned in the previous paragraph), and if $M$ is an $H/H_1$-module, write $M^{(-1)}$ for the space $M$ considered as an $H$-module via the isomorphism $H/H_1 \cong H$. 

For $\lambda\in X^{+}$, let $L(\lambda)$ be the simple $G$-module of highest weight $\lambda$; it is the $G$-socle of the induced module $H^0(\lambda)=\ind_{B^-}^{G}\lambda$. Similarly, given $J \subseteq \Delta$ and $\lambda \in X_J^+$, write $L_J(\lambda)$ for the simple $L_J$-module of highest weight $\lambda$.

\section{\texorpdfstring{Cohomology of $(U_J)_1$ with coefficients in $L(\lambda)$} {Cohomology of UJ1 with coefficients in L(lambda)}} \label{section:UJ1cohomology}

\subsection{Weight combinatorics and the Weyl group}

We begin with some elementary observations concerning weights. The first lemma is a restatement of \cite[Lemma 13.2A]{Hum:1978}, and the second lemma expresses the well-known fact that the affine Weyl group $W_p$ acts simply transitively on the set of alcoves in $\E$ for $W_p$ \cite[II.6.2(4)]{Jan:2003}.

\begin{lemma} \label{lemma:dotactionfaithful}
Let $w_1,w_2 \in W$, and let $\lambda \in X^+$. If $w_1 \cdot \lambda = w_2 \cdot \lambda$, then $w_1 = w_2$.
\end{lemma}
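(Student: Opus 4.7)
The plan is to reduce the claim to the well-known fact that the ordinary (un-dotted) action of $W$ is free on strictly dominant (regular) weights. By definition of the dot action, the hypothesis $w_1 \cdot \lambda = w_2 \cdot \lambda$ unravels as $w_1(\lambda+\rho) - \rho = w_2(\lambda+\rho) - \rho$, i.e.\ $(w_2^{-1} w_1)(\lambda+\rho) = \lambda+\rho$. Thus it suffices to show that the $W$-stabilizer of $\lambda+\rho$ is trivial whenever $\lambda \in X^+$.

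For this, first I would observe that $\rho$ is strictly dominant, that is, $(\rho,\alpha^\vee) = 1$ for every simple $\alpha$ (and hence $(\rho,\beta^\vee) \geq 1$ for all $\beta \in \Phi^+$). Combined with $(\lambda,\beta^\vee) \geq 0$ for $\lambda \in X^+$, this gives $(\lambda+\rho,\beta^\vee) \geq 1 > 0$ for every $\beta \in \Phi^+$. Hence $\lambda+\rho$ lies in the interior of the fundamental Weyl chamber.

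Next I would invoke the standard fact that $W$ acts simply transitively on the set of Weyl chambers and that a weight in the (open) interior of a chamber has trivial $W$-stabilizer; equivalently, a weight fixed by some $w\neq e$ must lie on some reflecting hyperplane $\{\mu:(\mu,\beta^\vee)=0\}$, which $\lambda+\rho$ does not. Applying this with $w = w_2^{-1} w_1$ yields $w_2^{-1} w_1 = e$, hence $w_1 = w_2$.

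The argument is entirely formal once one has the regularity of $\lambda+\rho$, so there is no real obstacle — the only ``step'' to take care of is correctly unpacking the dot action and citing the freeness of $W$ on regular weights (e.g.\ \cite[Lemma 13.2A]{Hum:1978}).
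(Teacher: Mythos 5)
Your proof is correct and is essentially the paper's argument: the paper simply notes that this lemma is a restatement of \cite[Lemma 13.2A]{Hum:1978}, and your unpacking of the dot action to reduce to the triviality of the stabilizer of the strongly dominant weight $\lambda+\rho$ is exactly the content of that citation.
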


\begin{lemma} \label{lemma:dotactionmodp}
Let $w_1,w_2 \in W$, $\lambda \in C_\Z$, and $\sigma \in \Z \Phi$. If $w_1 \cdot \lambda = w_2 \cdot \lambda + p \sigma$, then $\sigma = 0$.
\end{lemma}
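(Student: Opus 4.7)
The plan is to reinterpret the hypothesis as a fixed-point statement inside the affine Weyl group $W_p := W \ltimes p\Z\Phi$ and then apply the simple transitivity of $W_p$ on alcoves that is quoted immediately before the lemma.

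First, I would use the defining formula $w \cdot \mu = w(\mu + \rho) - \rho$ of the dot action to rewrite the hypothesis $w_1 \cdot \lambda = w_2 \cdot \lambda + p\sigma$ in the equivalent form
\[
w_1(\lambda + \rho) = w_2(\lambda + \rho) + p\sigma.
\]
Letting $t_{p\sigma}$ denote translation by $p\sigma$ on $\E$, this equation says that the element $\varphi := w_2^{-1} \circ t_{-p\sigma} \circ w_1$ of $W_p$ fixes the point $\lambda + \rho$. This element genuinely lies in $W_p$ because $w_1, w_2 \in W \subseteq W_p$ and $t_{-p\sigma}$ lies in the translation subgroup $p\Z\Phi \subseteq W_p$.

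Next, I would observe that the hypothesis $\lambda \in C_\Z$ says exactly that $0 < (\lambda + \rho, \beta^\vee) < p$ for every $\beta \in \Phi^+$, so $\lambda + \rho$ lies in the \emph{interior} of the fundamental alcove for the linear action of $W_p$ on $\E$. Simple transitivity of $W_p$ on the set of alcoves forces the stabilizer of any interior point of an alcove to be trivial, and therefore $\varphi$ is the identity map on $\E$.

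Finally, unwinding $\varphi = \mathrm{id}$ gives the equality of affine transformations $t_{-p\sigma} \circ w_1 = w_2$. Evaluating both sides at $0 \in \E$ (noting that $w_1(0) = w_2(0) = 0$) yields $-p\sigma = 0$, and since $p \neq 0$ in $\E$ we conclude $\sigma = 0$. Incidentally one also reads off $w_1 = w_2$ by comparing linear parts, though only the assertion about $\sigma$ is claimed. I do not anticipate any real obstacle beyond carefully tracking the translation $p\sigma$ when passing from the dot action to the linear action of $W_p$ on $\E$; once that bookkeeping is in place, the result is an immediate consequence of simple transitivity.
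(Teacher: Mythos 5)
Your argument is correct and is exactly the one the paper intends: the paper gives no written proof, merely noting that the lemma ``expresses the well-known fact'' of simple transitivity of $W_p$ on alcoves, and your proposal is the standard unwinding of that fact (an element of $W_p$ fixing the interior point $\lambda+\rho$ of the fundamental alcove must be the identity, whence $\sigma=0$).
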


\begin{remark}
The conclusion Lemma \ref{lemma:dotactionmodp} need not hold if the weight $\lambda$ is merely in the closure $\CZbar$ of the bottom $p$-alcove. For example, suppose $p = 5$ and $\Phi$ is of type $A_2$. Then $\lambda := 2\omega_1 + \omega_2 \in \CZbar$, but $w_0 \cdot \lambda = e \cdot \lambda + 5(-\alpha_1-\alpha_2)$, where $e$ denotes the identity element of $W$.
\end{remark}

\begin{lemma} \label{lemma:sumofdotactionmodp}
Let $w_1,w_2,w_3 \in W$, and suppose $w_1 \cdot 0 + w_2 \cdot 0 = w_3 \cdot 0 + p\sigma$ for some $\sigma \in \Z\Phi$. If $p > 2(h-1)$, then $\sigma = 0$.
\end{lemma}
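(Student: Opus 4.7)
The plan is to pair both sides of the equation against every coroot and apply a uniform bound. First, I rewrite the hypothesis as
\[
w_1\rho + w_2\rho - w_3\rho - \rho = p\sigma.
\]
The key input is the bound $|(\rho, \alpha^\vee)| \leq h - 1$ valid for every coroot $\alpha^\vee$: this holds because $\rho$ is dominant, $\alpha_0^\vee$ is the highest positive coroot in the partial order on $\Phi^\vee$, and $(\rho, \alpha_0^\vee) = h - 1$ by the definition of the Coxeter number given in the paper. Since $W$ acts orthogonally on $\E$ and permutes the coroots, the same bound applies to $(w\rho, \alpha^\vee) = (\rho, w^{-1}\alpha^\vee)$ for every $w \in W$.

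Pairing the rewritten equation against an arbitrary coroot $\alpha^\vee$ and invoking the triangle inequality then yields $|p(\sigma, \alpha^\vee)| \leq 4(h-1)$. The hypothesis $p > 2(h-1)$ forces $|(\sigma, \alpha^\vee)| < 2$, whence $(\sigma, \alpha^\vee) \in \{-1, 0, 1\}$, since the pairing is integer-valued.

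Because this conclusion holds for every coroot $\alpha^\vee$, the weight $\sigma$ is minuscule, so it lies in the $W$-orbit of $0$ or of a minuscule dominant fundamental weight. To conclude, I would invoke the classical classification of minuscule representations: every nonzero minuscule fundamental weight represents a nontrivial coset of $X/\Z\Phi$ and therefore does not lie in the root lattice. Since $W$ preserves $\Z\Phi$ and $\sigma \in \Z\Phi$ by hypothesis, the $W$-orbit of $\sigma$ can only be $\{0\}$, giving $\sigma = 0$.

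The main obstacle is the last step: the classification of minuscule representations (and the fact that the nonzero minuscule weights are not in $\Z\Phi$) is type-dependent, so rather than verifying it case by case I would cite a standard reference such as Bourbaki's tables. Everything else reduces to the uniform pairing bound and one application of the triangle inequality.
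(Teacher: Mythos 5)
Your proof is correct and follows essentially the same route as the paper's: both establish the bound $p|(\sigma,\alpha^\vee)| \leq 4(h-1)$ (the paper via $w_i\cdot 0 \leq 2\rho$ and pairing a dominant conjugate of $\sigma$ against $\alpha_0^\vee$, you via the triangle inequality on $w_i\rho$ against an arbitrary coroot), and both then conclude that a nonzero $\sigma$ would be conjugate to a minuscule dominant weight, which cannot lie in $\Z\Phi$. The paper cites \cite[Exercise 13.13]{Hum:1978} for the minuscule characterization, playing the role of your Bourbaki reference.
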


\begin{proof}
Suppose $w_1 \cdot 0 + w_2 \cdot 0 = w_3 \cdot 0 + p\sigma$ for some $\sigma \in \Z\Phi$. Then $(w_3^{-1}w_1) \cdot 0 + w_3^{-1}(w_2 \cdot 0) = p(w_3^{-1}\sigma)$. Choose $y \in W$ such that $yw_3^{-1} \sigma \in X^+$. Then
\[
y( (w_3^{-1}w_1) \cdot 0) + yw_3^{-1}(w_2 \cdot 0) = p(yw_3^{-1} \sigma) \in \Z \Phi \cap X^+.
\] 
Set $w_1' = w_3^{-1}w_1$, $y' = yw_3^{-1}$, and $\sigma' = yw_3^{-1} \sigma$. Then
\begin{equation} \label{eq:adjointrepweights}
y(w_1' \cdot 0) + y'(w_2 \cdot 0) = p\sigma' \in \Z\Phi \cap X^+.
\end{equation}
Now, $w_1' \cdot 0$ and $w_2 \cdot 0$ are each sums of distinct roots in $\Phi$. Then the same is true for $y(w_1' \cdot 0)$ and $y'(w_2 \cdot 0)$, so $p\sigma' \leq 2\rho + 2\rho = 4 \rho$. Taking the inner product with $\alpha_0^\vee$ preserves the inequality, so we get $p(\sigma',\alpha_0^\vee) \leq 4(h-1)$. Now suppose $\sigma \neq 0$. Then also $\sigma' \neq 0$, hence $\sigma'$ is a nonzero dominant weight in the root lattice. If $(\sigma',\alpha^\vee) \leq 1$ for all $\alpha \in \Phi^+$, then this would imply that $\sigma'$ is a minuscule dominant weight \cite[Exercise 13.13]{Hum:1978}, a contradiction because the minuscule dominant weights are not in the root lattice. Then $(\sigma',\alpha^\vee) \geq 2$ for some $\alpha \in \Phi^+$. Since $\alpha_0^\vee$ is the unique highest root in the dual root system $\Phi^\vee$, this implies that also $(\sigma',\alpha_0^\vee) \geq 2$. Now $2p \leq p(\sigma', \alpha_0^\vee) \leq 4 (h-1)$, hence $p \leq 2(h-1)$. So if $p > 2(h-1)$, then necessarily $\sigma = 0$.
\end{proof}

\begin{example} \label{example:B2weights}
The conclusion of Lemma \ref{lemma:sumofdotactionmodp} need not hold if $p < 2(h-1)$. Indeed, suppose that $p=5$, and that $\Phi$ is of type $B_2$, so that $h=4$. Write $\Delta = \set{\alpha,\beta}$ with $\alpha$ a long root. Then $(s_\beta s_\alpha) \cdot 0 = - \alpha - 3 \beta$ and $(s_\alpha s_\beta) \cdot 0 = -2\alpha - \beta$, so that $(s_\beta s_\alpha) \cdot 0 + (s_\beta s_\alpha) \cdot 0 = (s_\alpha s_\beta) \cdot 0 + 5(-\beta)$.
\end{example}

The following lemma generalizes Lemma \ref{lemma:sumofdotactionmodp}.

\begin{lemma} \label{lemma:adjointrepweightsmodp}
Let $J \subseteq \Delta$. For $i \in \set{1,2,3}$, let $w_i \in {}^JW$, and let $\mu_i$ be a weight of $T$ in $L_J(w_i \cdot 0)$. Suppose $p > 3(h-1)$, and that $\mu_1 + \mu_2 = \mu_3 + p \sigma$ for some $\sigma \in \Z\Phi$. Then $\sigma = 0$.
\end{lemma}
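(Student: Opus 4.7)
The plan is to follow the template of Lemma \ref{lemma:sumofdotactionmodp}, with the only essential new ingredient being a uniform estimate $\abs{(y\mu_i,\alpha_0^\vee)} \leq 2(h-1)$ that holds for any $y \in W$ and any weight $\mu_i$ of $L_J(w_i \cdot 0)$. The cancellation trick from the previous lemma, which exploited that $\mu_3$ was literally $w_3 \cdot 0$, is no longer available, so three terms must be bounded rather than two; the total worsens from $4(h-1)$ to $6(h-1)$, and the hypothesis strengthens accordingly from $p > 2(h-1)$ to $p > 3(h-1)$.

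First I would choose $y \in W$ with $y\sigma \in X^+$ and apply $y$ linearly to the identity $\mu_1 + \mu_2 - \mu_3 = p\sigma$ to obtain
\[
y\mu_1 + y\mu_2 - y\mu_3 = p(y\sigma) \in \Z\Phi \cap X^+.
\]
Next I would establish the key estimate. Since $\mu_i$ is a weight of the irreducible $L_J$-module $L_J(w_i\cdot 0)$, it lies in the convex hull of the linear $W_J$-orbit $\set{v(w_i \cdot 0): v \in W_J}$; hence $y\mu_i$ is a convex combination of vectors of the form $yv(w_i \cdot 0) = (yvw_i)\rho - (yv)\rho$ with $v \in W_J$. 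Because $(\rho,\alpha_0^\vee) = h-1$ is the maximum of $\abs{(\rho,\gamma^\vee)}$ over all $\gamma \in \Phi$, we have $\abs{(w'\rho,\alpha_0^\vee)} = \abs{(\rho,(w')^{-1}\alpha_0^\vee)} \leq h-1$ for every $w' \in W$. The triangle inequality then gives $\abs{(yv(w_i\cdot 0),\alpha_0^\vee)} \leq 2(h-1)$, and this bound is preserved by convex combinations.

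Summing the three individual bounds yields $p(y\sigma,\alpha_0^\vee) \leq 6(h-1)$. Finally, suppose toward a contradiction that $\sigma \neq 0$. Then $y\sigma$ is a nonzero dominant element of the root lattice, and the minuscule-weight argument from the proof of Lemma \ref{lemma:sumofdotactionmodp} --- namely that such a weight cannot be minuscule, so it pairs to at least $2$ against some positive coroot and therefore, since $\alpha_0^\vee$ is the highest element of $\Phi^\vee$, against $\alpha_0^\vee$ as well --- gives $(y\sigma,\alpha_0^\vee) \geq 2$. Combined with the previous inequality, $2p \leq 6(h-1)$, contradicting $p > 3(h-1)$. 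The one genuinely new step, and thus the main obstacle, is the convex-hull bound on the weights $\mu_i$; everything else is a direct adaptation of the earlier argument.
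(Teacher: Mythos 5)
Your proof is correct and follows the same skeleton as the paper's: conjugate $\sigma$ into the dominant chamber, bound $\abs{(y\mu_i,\alpha_0^\vee)}$ by $2(h-1)$ for each of the three terms to get $p(y\sigma,\alpha_0^\vee)\leq 6(h-1)$, and then run the minuscule-weight argument to force $(y\sigma,\alpha_0^\vee)\geq 2$. The only divergence is in how the key estimate is justified: you use the fact that the weights of $L_J(w_i\cdot 0)$ lie in the convex hull of the $W_J$-orbit of $w_i\cdot 0 = w_i\rho-\rho$, whereas the paper observes that $L_J(w_i\cdot 0)$ is a composition factor of $\opH^\bullet(\fraku_J,k)$, hence its weights occur in $\Lambda^\bullet(\g^*)$ and lie between $-2\rho$ and $2\rho$; both routes yield the same bound, and yours is slightly more self-contained.
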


\begin{proof}
Suppose $\mu_1 + \mu_2 = \mu_3 + p\sigma$ for some $\sigma \in \Z\Phi$. Choose $w \in W$ such that $\sigma':=w\sigma \in X^+$, and set $\mu_i' = w\mu_i$. Then $p\sigma' = \mu_1'+\mu_2'-\mu_3'$. The module $L_J(w_i \cdot 0)$ occurs as an $L_J$-composition factor of the cohomology ring $\opH^\bullet(\fraku_J,k)$ \cite[Theorem 4.2.1]{UGA:2009}, which can be computed as a subquotient of the exterior algebra $\Lambda^\bullet(\fraku_J^*)$. The weights of $T$ in $\Lambda^\bullet(\fraku_J^*)$ are also weights of $T$ in the rational $G$-module $\Lambda^\bullet(\g^*)$, which has highest weight $2\rho$ and lowest weight $-2\rho$. Then $\mu_i'$ is also weight of $T$ in $\Lambda^\bullet(\g^*)$, and $-2\rho \leq \mu_i' \leq 2\rho$. This implies that $p\sigma' \leq 6\rho$, and hence that $p(\sigma',\alpha_0^\vee) \leq 6(\rho,\alpha_0^\vee)$. Now if $\sigma \neq 0$, we get as in the proof of Lemma \ref{lemma:sumofdotactionmodp} that $2p \leq 6(h-1)$, that is, $p \leq 3(h-1)$, so if $p > 3(h-1)$, then necessarily $\sigma = 0$.
\end{proof}

\subsection{\texorpdfstring{$L_J$}{LJ}-module structure}

We now compute $\opH^\bullet((U_J)_1,L(\lambda))$ as a graded $L_J$-module.

\begin{theorem} \label{theorem:UJ1computation}
Let $\lambda \in C_\Z\cap X^{+}$. Fix $J \subseteq \Delta$. There exists an isomorphism of graded $L_J$-modules
\begin{equation} \label{eq:UJ1computation}
\opH^\bullet((U_J)_1,L(\lambda)) \cong S^\bullet(\fraku_J^*)^{(1)} \otimes \opH^\bullet(\fraku_J,L(\lambda)),
\end{equation}
where $\fraku_J^*$ in $S^\bullet(\fraku_J^*)$ has cohomological degree 2.
\end{theorem}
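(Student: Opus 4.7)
The plan is to apply the May-type spectral sequence of Friedlander and Parshall \cite{FP:1986} attached to $(U_J)_1$, identify its $E_2$-page via the parabolic Kostant formula, and then use the weight combinatorics of Section~2.1 both to collapse the spectral sequence and to split the resulting filtration as $L_J$-modules.

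First, because $p>h$, every element of $\fraku_J$ satisfies $x^{[p]}=0$, so the $E_1$-differential of the May spectral sequence for $\Dist((U_J)_1)\cong u(\fraku_J)$ reduces to the Koszul differential dual to Chevalley--Eilenberg. This produces an $L_J$-equivariant first-quadrant spectral sequence
\[
E_2^{2i,j}=S^i(\fraku_J^*)^{(1)}\otimes\opH^j(\fraku_J,L(\lambda))\Longrightarrow \opH^{2i+j}((U_J)_1,L(\lambda)),
\]
with $L_J$ acting via its normalization of $(U_J)_1$ and its adjoint action on $\fraku_J$. The parabolic Kostant-type isomorphism of \cite[Theorem~4.2.1]{UGA:2009}, available here because $\lambda\in C_\Z\cap X^+$ and $p>h$, gives $\opH^j(\fraku_J,L(\lambda))\cong\bigoplus_{w\in{}^JW,\ \ell(w)=j}L_J(w\cdot\lambda)$, so every $L_J$-composition factor of the $E_2$-page has a highest weight of the form $w\cdot\lambda+p\mu$ with $w\in{}^JW$ and $\mu\in\Z\Phi$ arising from a weight of $S^\bullet(\fraku_J^*)$.

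Next, I would use Lemmas~\ref{lemma:dotactionfaithful} and \ref{lemma:dotactionmodp} to obtain collapse. If $w\cdot\lambda+p\mu=w'\cdot\lambda+p\mu'$, then $w\cdot\lambda-w'\cdot\lambda=p(\mu'-\mu)\in p\Z\Phi$; Lemma~\ref{lemma:dotactionmodp} then forces $\mu=\mu'$, and Lemma~\ref{lemma:dotactionfaithful} forces $w=w'$. Hence each $L_J$-composition factor appears in a single bidegree of $E_2$, so the composition-factor sets of source and target of any differential $d_r$ ($r\geq 2$) are disjoint, every $d_r$ vanishes, and $E_\infty=E_2$. The resulting $L_J$-stable filtration on $\opH^\bullet((U_J)_1,L(\lambda))$ has associated graded $S^\bullet(\fraku_J^*)^{(1)}\otimes\opH^\bullet(\fraku_J,L(\lambda))$; to split it, I would invoke the linkage principle for $L_J$ combined with the weight separation above to kill $\Ext^1_{L_J}$ between consecutive layers.

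The main obstacle I foresee is this final splitting. Since $L_J$-modules are not semisimple in positive characteristic, passing from the associated graded to an actual direct-sum decomposition requires a careful linkage argument: one must verify that the highest weights $w\cdot\lambda+p\mu$ lie in pairwise distinct dot-linkage classes under the affine Weyl group of $L_J$, a fact that is controlled by Lemma~\ref{lemma:dotactionmodp} and that uses essentially that $\lambda$ lies in the strict interior of $C_\Z$ rather than merely in its closure.
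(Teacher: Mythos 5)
Your proposal follows essentially the same route as the paper's proof: the Friedlander--Parshall spectral sequence, the parabolic Kostant formula combined with the Steinberg tensor product theorem to identify the $L_J$-composition factors of the $E_2$-page as $L_J(w\cdot\lambda+p\mu)$, collapse via Lemmas \ref{lemma:dotactionmodp} and \ref{lemma:dotactionfaithful}, and splitting of the resulting filtration via the linkage principle for $L_J$ together with the same weight separation. The only (harmless) overstatement is the claim that each composition factor lives in a single bidegree --- the same weight $\mu$ can occur in $S^i(\fraku_J^*)$ for several $i$ --- but since a factor does determine the row $j=\ell(w)$ and every $d_r$ with $r\geq 2$ changes the row, the disjointness of composition factors between source and target that your argument actually needs still holds.
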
 

\begin{proof}
By \cite[Proposition 1.1]{FP:1986}, there exists a spectral sequence of $L_J$-modules
\begin{equation} \label{eq:FPspectralsequence}
E_2^{2i,j} = S^i(\fraku_J^*)^{(1)} \otimes \opH^j(\fraku_J,L(\lambda)) \Rightarrow \opH^{2i+j}((U_J)_1,L(\lambda))
\end{equation}
for which $E_2^{i,j} = 0$ for all odd $i$. We claim that the spectral sequence collapses at the $E_2$-page, that is, that $E_2^{i,j} \cong E_\infty^{i,j}$. Using the derivation property of the differential, it suffices to show for all $r \geq 2$ and $j \geq 0$ that $d_r^{0,j} = 0$. This we will do by showing that any $L_J$-composition factor of $E_r^{0,j}$ cannot be a composition factor of $E_r^{r,j+1-r}$. Since $E_r^{i,j}$ is a subquotient of $E_2^{i,j}$, it is enough to show that any composition factor of $E_2^{0,j}$ cannot be a composition factor of $E_2^{r,j+1-r}$.

By \cite[Theorem 2.5]{FP:1986} or \cite[Theorem 4.2.1]{UGA:2009}, there exists an $L_J$-module isomorphism
\begin{equation} \label{eq:Kostantstheorem}
\opH^j(\fraku_J,L(\lambda)) \cong \bigoplus_{\substack{ w \in {}^JW \\ \ell(w) = j}} L_J(w \cdot \lambda).
\end{equation}
Thus, every composition factor of $E_2^{0,j}$ has the form $L_J(w_1 \cdot \lambda)$ for some $w_1 \in {}^JW$ with $\ell(w_1) = j$. Similarly, every composition factor of $E_2^{r,j+1-r} = S^{r/2}(\fraku_J^*)^{(1)} \otimes \opH^{j+1-r}(\fraku_J,L(\lambda))$ must have the form $L_J(\sigma)^{(1)} \otimes L_J(w_2 \cdot \lambda)$ for some $w_2 \in {}^JW$ with $\ell(w_2) = j+1-r$ (in particular, $\ell(w_2) < \ell(w_1)$), and some $\sigma \in X_J^+ \cap \N(\Phi^- \backslash \Phi_J^-)$ (i.e., $\sigma$ is a sum of negative roots not in $\Phi_J^-$).

Observe that for $\alpha \in J$, $(w_2\cdot\lambda,\alpha^\vee) = (\lambda+\rho,w_2^{-1}(\alpha^\vee))-1 \geq 0$, because $w_2 \in {}^JW$ implies that $w_2^{-1}(\alpha) \in \Phi^+$. Also, $(w_2 \cdot \lambda,\alpha^\vee) = (\lambda+\rho,w_2^{-1}(\alpha^\vee))-1 < p$, because $\lambda \in C_\Z$. Then $w_2 \cdot \lambda$ is a $J$-restricted dominant weight, so by the Steinberg Tensor Product Theorem, there exists an $L_J$-module isomorphism
\begin{equation} \label{eq:SteinbergTensorProduct}
L_J(\sigma)^{(1)} \otimes L_J(w_2 \cdot \lambda) \cong L_J(w_2 \cdot \lambda + p\sigma).
\end{equation}
Now suppose that $L_J(w_1 \cdot \lambda) \cong L_J(w_2 \cdot \lambda + p\sigma)$. Then $w_1 \cdot \lambda = w_2 \cdot \lambda + p\sigma$, so Lemmas \ref{lemma:dotactionmodp} and \ref{lemma:dotactionfaithful} imply that $\sigma = 0$ and $w_1 = w_2$. This contradicts the inequality $\ell(w_2) < \ell(w_1)$, so we conclude that no composition factor of $E_2^{0,j}$ can also be a composition factor of $E_2^{r,j+1-r}$, and hence that the spectral sequence \eqref{eq:FPspectralsequence} collapses at the $E_2$-page.

We have shown for all $i$ and $j$ that $E_2^{i,j} \cong E_\infty^{i,j}$. Recall that the $E_\infty$-page of \eqref{eq:FPspectralsequence} is the associated graded module coming from some $L_J$-submodule filtration of $\Hbul((U_J)_1,L(\lambda))$. To finish the proof of the theorem, we must show that $\Hbul((U_J)_1,L(\lambda))$ is isomorphic as an $L_J$-module to its associated graded object. For this, it suffices to show for all $m \neq n$ that
\begin{equation} \label{eq:Ext1LJvanish}
\Ext_{L_J}^1(S^\bullet(\fraku_J^*)^{(1)} \otimes \opH^n(\fraku_J,L(\lambda)), S^\bullet(\fraku_J^*)^{(1)} \otimes \opH^m(\fraku_J,L(\lambda))) = 0.
\end{equation}
Using the long exact sequence in cohomology, and applying the isomorphisms \eqref{eq:Kostantstheorem} and \eqref{eq:SteinbergTensorProduct}, it suffices even to show that $\Ext_{L_J}^1(L_J(w_1 \cdot \lambda+p\sigma),L_J(w_2 \cdot \lambda + p\mu)) = 0$ whenever $w_1,w_2 \in {}^JW$, $w_1 \neq w_2$, and $\mu,\sigma \in \N(\Phi^- \backslash \Phi_J^-)$. So suppose $\Ext_{L_J}^1(L_J(w_1 \cdot \lambda+p\sigma),L_J(w_2 \cdot \lambda + p\mu)) \neq 0$. Then by the Linkage Principle for $L_J$ \cite[II.6.17]{Jan:2003}, there exists $w \in W_J$ and $\gamma \in \Z\Phi_J$ such that
\[
w_1 \cdot \lambda +p\sigma = w \cdot (w_2 \cdot \lambda + p\mu)+p\gamma = (ww_2) \cdot \lambda + p(w\mu+\gamma).
\]
By Lemmas \ref{lemma:dotactionmodp} and \ref{lemma:dotactionfaithful}, this implies that $w_1 \cdot \lambda = (ww_2) \cdot \lambda$, and hence that $w_1 = ww_2$. This is a contradiction, because $w_1$ and $w_2$ were chosen as distinct minimal length right coset representatives of $W_J$ in $W$. So we conclude that $\Ext_{L_J}^1(L_J(w_1 \cdot \lambda+p\sigma),L_J(w_2 \cdot \lambda + p\mu)) = 0$, as claimed.
\end{proof}

\begin{remark}
Friedlander and Parshall observed \eqref{eq:UJ1computation} for $\lambda \in X^+$ satisfying $p > (\lambda,\alpha^\vee_0)+h$ \cite[Remark 2.7(b)]{FP:1986}. Our result strengthens this to $p \geq (\lambda,\alpha_0^\vee)+h$, i.e., for $\lambda \in C_\Z \cap X^+$. 
\end{remark}

\subsection{Structure of the associated graded algebra}

Given an algebra $A$ with a multiplicative filtration indexed by the nonnegative integers, let $\gr A$ be the associated graded algebra.

\begin{corollary} \label{corollary:UJ1ringcomputation}
Fix $J \subseteq \Delta$, and suppose $p > h$. Then there exists a multiplicative filtration on $\opH^\bullet((U_J)_1,k)$ by $L_J$-submodules such that
\begin{equation} \label{eq:gradedalgebraiso}
\gr \opH^\bullet((U_J)_1,k) \cong S^\bullet(\fraku_J^*)^{(1)} \otimes \opH^\bullet(\fraku_J,k)
\end{equation}
as graded $L_J$-algebras, where $\fraku_J^*$ in $S^\bullet(\fraku_J^*)$ has cohomological degree 2, and the algebra structure on the right-hand side of the isomorphism is the standard tensor product of algebras.
\end{corollary}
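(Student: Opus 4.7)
The plan is to bootstrap from Theorem \ref{theorem:UJ1computation} applied to $\lambda = 0$. Since $p > h$, the trivial weight $0$ lies in $C_\Z$ (indeed $(\rho,\beta^\vee) \leq (\rho,\alpha_0^\vee) = h-1 < p$ for all $\beta \in \Phi^+$), so the theorem applies and its proof already exhibits the collapse of the Friedlander--Parshall spectral sequence \eqref{eq:FPspectralsequence}. The remaining task is to promote the $L_J$-module isomorphism into a graded $L_J$-algebra isomorphism at the level of the associated graded.

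The first step is to recall that the spectral sequence
\[
E_2^{2i,j} = S^i(\fraku_J^*)^{(1)} \otimes \opH^j(\fraku_J,k) \Rightarrow \opH^{2i+j}((U_J)_1,k)
\]
is a spectral sequence of $L_J$-algebras, arising from a multiplicative filtration on a DG-algebra resolution in \cite[Proposition 1.1]{FP:1986}; the $E_2$-page algebra structure coincides with the standard tensor product $S^\bullet(\fraku_J^*)^{(1)} \otimes \opH^\bullet(\fraku_J,k)$. As a consequence, the abutment $\opH^\bullet((U_J)_1,k)$ inherits a multiplicative decreasing filtration by $L_J$-submodules whose associated graded algebra is isomorphic to $E_\infty$ as a bigraded $L_J$-algebra.

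Next I would upgrade the collapse from the module level to the algebra level. The $L_J$-algebra $E_2$ is generated by elements in bidegrees $(0,j)$ (coming from $\opH^\bullet(\fraku_J,k)$) and $(2,0)$ (the generators of the symmetric algebra $S^\bullet(\fraku_J^*)^{(1)}$). The proof of Theorem \ref{theorem:UJ1computation} specialized to $\lambda=0$ shows that $d_r^{0,j}=0$ for all $r\geq 2$ and $j\geq 0$. For a generator in bidegree $(2,0)$, the differential $d_r$ lands in $E_r^{2+r,1-r}$, which vanishes automatically for $r\geq 2$ because $1-r<0$. The Leibniz rule for the multiplicative spectral sequence then forces $d_r = 0$ throughout, so $E_2 \cong E_\infty$ as bigraded $L_J$-algebras. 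Chaining the algebra isomorphisms yields \eqref{eq:gradedalgebraiso}.

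The main obstacle I anticipate is bookkeeping rather than conceptual: one must verify carefully that the identification of the $E_2$-page with the stated tensor product of algebras is honestly an algebra isomorphism and not just an $L_J$-module isomorphism. This reduces to recalling that the Koszul-type resolution underpinning \cite[Proposition 1.1]{FP:1986} is itself a DG algebra whose induced $E_2$-product on the cohomology of its filtration quotients is precisely the tensor product of the Chevalley--Eilenberg cohomology with the polynomial generators arising from the Bockstein-type classes in cohomological degree $2$.
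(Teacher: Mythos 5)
Your proposal is correct and follows essentially the same route as the paper: specialize Theorem~\ref{theorem:UJ1computation} to $\lambda = 0$ (noting $0 \in C_\Z$ since $p > h$), use that \eqref{eq:FPspectralsequence} is then a multiplicative spectral sequence of $L_J$-algebras, and read off \eqref{eq:gradedalgebraiso} as the algebra isomorphism between the $E_2$- and $E_\infty$-pages. The paper's proof is a single sentence, and your Leibniz-rule argument for upgrading the collapse to the algebra level is precisely the ``derivation property of the differential'' step already invoked inside the proof of Theorem~\ref{theorem:UJ1computation}.
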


\begin{proof}
Take $\lambda = 0$ in Theorem \ref{theorem:UJ1computation}. Then $L(\lambda) = k$, \eqref{eq:FPspectralsequence} is a spectral sequence of algebras, and \eqref{eq:gradedalgebraiso} is the algebra isomorphism between the $E_2$- and $E_\infty$-pages of \eqref{eq:FPspectralsequence}.
\end{proof}

\begin{remark} \label{remark:polynomialfiltration}
The $L_J$-submodule filtration on $\Hbul((U_J)_1,k)$ described in Corollary \ref{corollary:UJ1ringcomputation} can be given explicitly as follows: For $i \geq 0$, let $F^i \Hbul((U_J)_1,k)$ denote the $i$-th filtered part of $\Hbul((U_J)_1,k)$. Then there exists an $L_J$-module isomorphism
\[
F^i \opH^n((U_J)_1,k) \cong \bigoplus_{j \geq i} S^{j/2}(\fraku_J^*)^{(1)} \otimes \opH^{n-j}(\fraku,k),
\]
where both sides are zero if $i > n$.
\end{remark}

\begin{remark} \label{remark:ordinaryringstructure}
Suppose $p \geq h-1$. Implicit in \cite{UGA:2009} is the following description of the ring structure on $\opH^\bullet(\fraku,k)$. First, $\opH^\bullet(\fraku,k)$ is computed as a subquotient of the Koszul complex $\Lambda^\bullet(\fraku^*)$ (the exterior algebra on $\fraku^*$). Fix an ordered root vector basis $\set{x_{\gamma_1},\ldots,x_{\gamma_N}}$ for $\fraku$, and let $\set{f_{\gamma_1},\ldots,f_{\gamma_N}} \subset \fraku^*$ be the corresponding dual basis. For $w \in W$, define $\Phi(w) = w\Phi^- \cap \Phi^+$. Write $\Phi(w) = \set{\beta_1,\ldots,\beta_n}$ with $n = \ell(w)$, and set $f_{\Phi(w)} = f_{\beta_1} \wedge \cdots \wedge f_{\beta_n} \in \Lambda^n(\fraku^*)$. For definiteness, assume that the list $\set{f_{\beta_1},\ldots,f_{\beta_n}}$ appears as a subsequence of the list $\set{f_{\gamma_1},\ldots,f_{\gamma_N}}$. Then the vectors $f_{\Phi(w)}$ for $w \in W$ are cocycles in $\Lambda^\bullet(\fraku^*)$, and their images $[f_{\Phi(w)}]$ in $\opH^\bullet(\fraku,k)$ form a vector space basis for $\opH^\bullet(\fraku,k)$. The ring structure on $\opH^\bullet(\fraku,k)$ is induced by the ring structure of $\Lambda^\bullet(\fraku^*)$. Specifically, the cup product of cohomology classes is given by
\[
[f_{\Phi(w)}] \cup [f_{\Phi(w')}] = \begin{cases} (-1)^{s(w,w')} [f_{\Phi(w'')}] & \text{if $\ell(w)+\ell(w') = \ell(w'')$ and $\Phi(w) \cup \Phi(w') = \Phi(w'')$}, \\
0 & \text{otherwise.} \end{cases}
\]
If $\Phi(w) = \set{\beta_1,\ldots,\beta_n}$ and $\Phi(w') = \set{\beta_1',\ldots,\beta_m'}$ with $\set{\beta_1,\ldots,\beta_n}$ and $\set{\beta_1',\ldots,\beta_m'}$ written as subsequences of the list $\set{\gamma_1,\ldots,\gamma_N}$, then the integer $(-1)^{s(w,w')}$ is the sign of the permutation that maps the list $\set{\beta_1,\ldots,\beta_n,\beta_1',\ldots,\beta_m'}$ to a subsequence of the list $\set{\gamma_1,\ldots,\gamma_N}$.

\end{remark}

\section{\texorpdfstring{The ring structure of $\opH^\bullet(U_1,k)$}{The ring structure of H(U1,k)}} \label{section:UJ1ringstructure}

\subsection{Un-grading the associated graded algebra}

For $p > h$, the ring structure of the cohomology rings $\opH^\bullet(G_1,k)$ and $\opH^\bullet(B_1,k)$ for the first Frobenius kernels of $G$ and $B$ were computed by Friedlander and Parshall \cite{FP:1986a} and Andersen and Jantzen \cite{AJ:1984}. In this section we compute, for $J \subseteq \Delta$ and for $p$ not too small, the ring structure of the cohomology ring $\opH^\bullet((U_J)_1,k)$ for the first Frobenius kernel of $U_J$. In particular, we compute the ring structure of $\Hbul(U_1,k)$.

\begin{theorem} \label{theorem:U1cohomologyringstructure}
Fix $J \subseteq \Delta$. If $J = \emptyset$, assume $p>2(h-1)$. If $J \neq \emptyset$, assume $p > 3(h-1)$. Then there exists a graded $L_J$-algebra isomorphism
\begin{equation}
\opH^\bullet((U_J)_1,k) \cong S^\bullet(\fraku_J^*)^{(1)} \otimes \opH^\bullet(\fraku_J,k),
\end{equation}
where the algebra structure on the right-hand side is the ordinary tensor product of algebras.
\end{theorem}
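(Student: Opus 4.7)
By Corollary \ref{corollary:UJ1ringcomputation}, for $p > h$ the cohomology ring $A := \opH^\bullet((U_J)_1,k)$ already carries an $L_J$-stable multiplicative filtration $F^\bullet$ whose associated graded is isomorphic, as a graded $L_J$-algebra, to $S^\bullet(\fraku_J^*)^{(1)} \otimes \opH^\bullet(\fraku_J,k)$. The task is to upgrade this to an ungraded algebra isomorphism $A \cong \gr A$ under the stronger hypotheses on $p$. The plan is to choose $L_J$-equivariant lifts of algebra generators and then use the weight/composition-factor lemmas of Section \ref{section:UJ1cohomology} to rule out any ``correction'' contributions in products.

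Using reductivity of $L_J$, pick $L_J$-equivariant lifts $\tilde y : \fraku_J^* \hookrightarrow F^2 A^2$ of the polynomial generators sitting in $S^1(\fraku_J^*)^{(1)}$, and for each $w \in {}^JW$ with $\ell(w) > 0$ a lift $\tilde z_w \in A^{\ell(w)}$ of the Kostant class $[f_{\Phi(w)}] \in \opH^{\ell(w)}(\fraku_J,k)$. For a product of two such lifts, the leading term in the appropriate subquotient of $F^\bullet$ recovers the tensor-product multiplication in $\gr A$, while any discrepancy is a correction term in $S^{a_1+a_2+s}(\fraku_J^*)^{(1)} \otimes \opH^{b_1+b_2-2s}(\fraku_J,k)$ for some $s \geq 1$. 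By $L_J$-equivariance and Schur's lemma, a nonzero correction forces an $L_J$-composition factor common to the source tensor product and the correction target; after invoking Steinberg's tensor product theorem (applicable since $w_i \cdot 0$ and the composition factors of $L_J(w_1 \cdot 0) \otimes L_J(w_2 \cdot 0)$ are $J$-restricted for the relevant range of $p$), this matching collapses to an identity
\[
\mu_1 + \mu_2 \;=\; w_3 \cdot 0 + p\,(\sigma - \tau),
\]
with $\mu_i$ a weight of $L_J(w_i \cdot 0)$, $w_3 \in {}^JW$ of length $b_1 + b_2 - 2s < b_1 + b_2$, and $\sigma, \tau$ highest weights of composition factors of the relevant symmetric powers of $\fraku_J^*$.

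In the case $J = \emptyset$, the $\mu_i$ are forced to coincide with $w_i \cdot 0$, and Lemma \ref{lemma:sumofdotactionmodp} (valid since $p > 2(h-1)$) yields $\sigma = \tau$ together with $w_1 \cdot 0 + w_2 \cdot 0 = w_3 \cdot 0$. Expanding $w \cdot 0 = -\sum_{\beta \in \Phi(w)}\beta$ and observing that each $\Phi(w)$ is a set of distinct positive roots forces $\Phi(w_3) = \Phi(w_1) \sqcup \Phi(w_2)$, hence $\ell(w_3) = \ell(w_1) + \ell(w_2)$, contradicting $\ell(w_3) < \ell(w_1) + \ell(w_2)$. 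In the case $J \neq \emptyset$, Lemma \ref{lemma:adjointrepweightsmodp} (valid since $p > 3(h-1)$) gives $\sigma = \tau$ and $\mu_1 + \mu_2 = w_3 \cdot 0$. Combining this with $\mu_i - w_i \cdot 0 \in \Z\Phi_J$ and $w_i \cdot 0 \in \N(\Phi^- \setminus \Phi_J^-)$, projection onto $\Z(\Delta \setminus J)$ via the decomposition $\Z\Phi = \Z J \oplus \Z(\Delta \setminus J)$ reduces matters to a combinatorial identity on $\Phi(w_i)$-multiplicities which, by a root-system analysis, forces the same length equality and the same contradiction.

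With corrections thus ruled out, the lifted generators satisfy the defining relations of $S^\bullet(\fraku_J^*)^{(1)} \otimes \opH^\bullet(\fraku_J,k)$: commutativity of the $\tilde y$'s (which in fact holds automatically in $F^4 A^4 = S^2(\fraku_J^*)^{(1)}$), the cup product rule of Remark \ref{remark:ordinaryringstructure} for the $\tilde z_w$'s, and commutation between the two factors via graded commutativity of cohomology. The resulting map is a graded $L_J$-algebra homomorphism and hence an isomorphism by Theorem \ref{theorem:UJ1computation}. The main obstacle is the combinatorial step in the $J \neq \emptyset$ case: extracting $\ell(w_3) = \ell(w_1) + \ell(w_2)$ from the weight identity $\mu_1 + \mu_2 = w_3 \cdot 0$ requires a careful root-system analysis beyond what Lemma \ref{lemma:adjointrepweightsmodp} directly supplies, and verifying it uniformly across all $\Phi$ and $J \subseteq \Delta$ is the core technical difficulty of the proof.
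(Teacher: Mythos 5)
Your overall strategy is the same as the paper's: start from the associated-graded isomorphism of Corollary \ref{corollary:UJ1ringcomputation} and use the weight Lemmas \ref{lemma:sumofdotactionmodp} and \ref{lemma:adjointrepweightsmodp} to rule out filtration-lowering ``correction'' terms in products. But your execution leaves a genuine gap, which you yourself flag at the end. After invoking the lemmas you are reduced to the weight identity $\mu_1+\mu_2 = w_3\cdot 0$ with $\ell(w_3) < \ell(w_1)+\ell(w_2)$, and you claim this is impossible because it ``forces $\Phi(w_3)=\Phi(w_1)\sqcup\Phi(w_2)$.'' That inference is not valid as stated: distinct (multi)sets of positive roots can have equal sums (e.g.\ $\{\alpha,\beta\}$ and $\{\alpha+\beta\}$), $\Phi(w_1)\sqcup\Phi(w_2)$ need not consist of distinct roots, and for $J\neq\emptyset$ the $\mu_i$ are arbitrary weights of $L_J(w_i\cdot 0)$, not the extreme ones. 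So the implication $\ell(w_3)=\ell(w_1)+\ell(w_2)$ is unproved, and you concede that the $J\neq\emptyset$ case remains ``the core technical difficulty.'' As written, the proof is incomplete.

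The difficulty is an artifact of your setup and is exactly what the paper's argument is designed to avoid. Instead of lifting individual generators and tracking corrections to products of general monomials (which is where your two symmetric-power weights $\sigma$ and $\tau$ come from), the paper fixes the $L_J$-submodule complement $\calB$ corresponding to the summands with $i=0$ in \eqref{eq:Tmoddirectsumdecomp} and multiplies two weight vectors \emph{of $\calB$}. Their weights $\mu_1,\mu_2$ carry no $p$-multiple, so a nonzero component of the product in a summand $S^i(\fraku_J^*)^{(1)}\otimes\opH^j(\fraku_J,k)$ with $i\geq 1$ yields $\mu_1+\mu_2=\mu_3+p\sigma$ with $\sigma$ a \emph{nonzero} sum of negative roots; Lemma \ref{lemma:sumofdotactionmodp} (resp.\ \ref{lemma:adjointrepweightsmodp}) forces $\sigma=0$, an immediate contradiction, with no length combinatorics at all. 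The paper then sidesteps your remaining obligation of verifying the cup-product relations among lifted Kostant classes: since the spectral sequence \eqref{eq:FPspectralsequence} collapses, the vertical edge map identifies $\calB=\Hbul((U_J)_1,k)/F^1$ with $\opH^\bullet(\fraku_J,k)$ \emph{as an algebra}, and the multiplication map $\calA\otimes\calB\to\Hbul((U_J)_1,k)$ is an algebra homomorphism (as $\calA$ is even and central by graded-commutativity), surjective because $\calA$ and $\calB$ generate the associated graded, and injective by dimension count. If you reorganize your argument along these lines---multiply elements of the $i=0$ complement rather than arbitrary lifted monomials---your appeal to the weight lemmas closes the argument and the unproved length identity is never needed.
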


\begin{proof}
To simplify the notation slightly, we give the proof for the case $J = \emptyset$. For the case $J \neq \emptyset$, one simply replaces in the following argument the symbols $T$, $U$, and $\fraku$ by $L_J$, $U_J$, and $\fraku_J$, respectively, and applies Lemma \ref{lemma:adjointrepweightsmodp} instead of Lemma \ref{lemma:sumofdotactionmodp}. Our strategy is to exhibit graded $T$-subalgebras $\calA$ and $\calB$ of $\Hbul(U_1,k)$ isomorphic to $S^\bullet(\fraku^*)^{(1)}$ and $\Hbul(\fraku,k)$, respectively, and then to show that the product map $\theta: \calA \otimes \calB \rightarrow \Hbul(U_1,k)$ is an isomorphism of algebras.

First set $\calA = \bigoplus_{n \geq 0} F^n \opH^n(U_1,k)$. Then $\calA$ is a subalgebra of $\Hbul(U_1,k)$, and it follows from Remark \ref{remark:polynomialfiltration} that $\calA$ is isomorphic as an $T$-algebra to $S^\bullet(\fraku^*)^{(1)}$. Next, by Theorem \ref{theorem:UJ1computation} there exists a $T$-submodule $\calB$ of $\Hbul(U_1,k)$ isomorphic to $\Hbul(\fraku,k)$. Specifically, in the $T$-module direct sum decomposition
\begin{equation} \label{eq:Tmoddirectsumdecomp}
\Hbul(U_1,k) \cong \bigoplus_{n \geq 0} \bigoplus_{2i+j = n} S^i(\fraku^*)^{(1)} \otimes \opH^j(\fraku,k),
\end{equation}
the space $\calB$ is the sum of all terms with $i = 0$. By Remark \ref{remark:polynomialfiltration}, $\calB = \Hbul(U_1,k)/F^1 \Hbul(U_1,k)$.

We claim that $\calB$ is a subalgebra of $\Hbul(U_1,k)$. To see this, it suffices to show that the product of two weight vectors in $\calB$ is again a weight vector in $\calB$. So let $z_1,z_2 \in \calB$ be vectors of weights $\mu_1$ and $\mu_2$, respectively, and suppose that $z_1z_2 \neq 0$. Then the product $z_1z_2$ has weight $\mu_1 + \mu_2$. Suppose the product $z_1z_2$ has a nonzero component in some summand $S^i(\fraku^*)^{(1)} \otimes \opH^j(\fraku,k)$ of \eqref{eq:Tmoddirectsumdecomp} with $i \neq 0$. Then there exists a weight $\sigma$ of $S^i(\fraku^*)$ and a weight $\mu_3$ of $\opH^j(\fraku,k)$ such that $\mu_1 + \mu_2 = \mu_3 + p\sigma$. Since $i \neq 0$, we must have $\sigma \neq 0$. But now the explicit description of the weights of $\Hbul(\fraku,k)$ given by \eqref{eq:Kostantstheorem} together with Lemma \ref{lemma:sumofdotactionmodp} imply that $\sigma = 0$, a contradiction. Thus, we must have $z_1z_2 \in \calB$, and so $\calB$ is a subalgebra of $\Hbul(U_1,k)$. To see that $\calB \cong \Hbul(\fraku,k)$ as an algebra, and not just as a $T$-module, observe that since the spectral sequence \eqref{eq:FPspectralsequence} collapses at the $E_2$-page, the vertical edge map $\varphi: \Hbul(U_1,k) \rightarrow E_2^{0,\bullet}$ induces a $T$-algebra isomorphism $\Hbul(U_1,k)/F^1 \Hbul(U_1,k) \cong E_2^{0,\bullet} \cong \Hbul(\fraku,k)$. By the above argument, the quotient $\Hbul(U_1,k)/F^1 \Hbul(U_1,k)$ identifies not only as a space but as an algebra with $\calB$, so we get the $T$-algebra isomorphism $\calB \cong \Hbul(\fraku,k)$.

We have now exhibited $T$-subalgebras $\calA$ and $\calB$ of $\Hbul(U_1,k)$ isomorphic to $S^\bullet(\fraku^*)^{(1)}$ and $\Hbul(\fraku,k)$, respectively. It remains to show that the product map $\theta: \calA \otimes \calB \rightarrow \Hbul(U_1,k)$ is an algebra isomorphism, where the algebra structure on $\calA \otimes \calB$ is the ordinary tensor product of algebras. First, the cohomology ring $\Hbul(U_1,k)$ identifies with the cohomology ring of the finite-dimensional Hopf algebra $\Dist(U_1)$ (cf.\ \cite[I.7, I.9]{Jan:2003}), so is a graded-commutative ring by \cite[VIII.4]{Mac-Lane:1995}\footnote{Mac Lane does not assume a Hopf algebra to possess an antipode. In particular, the cohomology ring of a bialgebra is always graded-commutative.}. Since the subalgebra $\calA$ is concentrated in even degrees, this implies that $\theta$ is an algebra homomorphism. Next, there exist natural maps
\begin{align*}
\iota_1: \calA &= \bigoplus_{n \geq 0} F^n \opH^n(U_1,k) \hookrightarrow \bigoplus_{i \geq 0} F^i \Hbul(U_1,k)/F^{i+1} \Hbul(U_1,k) \quad \text{and} \\
\iota_2: \calB &= \Hbul(U_1,k)/F^1 \Hbul(U_1,k) \hookrightarrow \bigoplus_{i \geq 0} F^i \Hbul(U_1,k)/F^{i+1} \Hbul(U_1,k).
\end{align*}
By Remark \ref{remark:polynomialfiltration}, the images of $\calA$ and $\calB$ under these maps generate the associated graded algebra $\gr \Hbul(U_1,k)$. It follows then that $\calA$ and $\calB$ also generate the cohomology ring $\Hbul(U_1,k)$. Indeed, given a nonzero homogeneous element $z \in \opH^n(U_1,k)$, choose $i$ such that $z \in F^i \opH^n(U_1,k)$ but $z \notin F^{i+1} \opH^n(U_1,k)$. Since $\iota_1(\calA)$ and $\iota_2(\calB)$ generate $\gr \Hbul(U_1,k)$, there exists $w \in \calA \otimes \calB$ such that $z - \theta(w) \in F^{i+1} \opH^n(U_1,k)$. By induction on $i-n$, we may assume that $F^{i+1} \opH^n(U_1,k)$ is in the image of the map $\theta$. Then $z \in \im(\theta)$, so we conclude that $\theta$ is surjective. Finally, by dimension comparison in each graded degree, $\theta$ must also be injective, hence an algebra isomorphism.
\end{proof}

\begin{remark}
The bound of $3(h-1)$ in Theorem \ref{theorem:U1cohomologyringstructure} for the case $J \neq \emptyset$ is not sharp. For example, suppose $\Phi$ has type $A_n$ with $n > 1$, and that $J = \Delta - \set{\alpha}$ for some simple root $\alpha$. Then $U_J$ is abelian, so $\opH^\bullet((U_J)_1,k) \cong S^\bullet(\fraku_J^*)^{(1)} \otimes \Lambda^\bullet(\fraku_J^*) \cong S^\bullet(\fraku_J^*)^{(1)} \otimes \opH^\bullet(\fraku_J,k)$ as a ring if $p > 2$.
\end{remark}

\subsection{Failure for small p}

If $p < 2(h-1)$, the algebra isomorphism $\opH^\bullet(U_1,k) \cong S^\bullet(\fraku^*)^{(1)} \otimes \opH^\bullet(\fraku,k)$ of Theorem \ref{theorem:U1cohomologyringstructure} need not hold, even though the isomorphism of associated graded algebras $\gr \Hbul(U_1,k) \cong S^\bullet(\fraku^*)^{(1)} \otimes \opH^\bullet(\fraku,k)$ holds whenever $p > h$.

\begin{example} \label{example:counterex}
Let $\Phi$ be of type $B_2$, so $h=4$, and take $p=5$. Write $\Delta = \set{\alpha,\beta}$ with $\alpha$ a long root. Then Example \ref{example:B2weights} shows that the weight argument in the proof of Theorem \ref{theorem:U1cohomologyringstructure} fails, as there exist weights $\mu_1 = \mu_2 = (s_\beta s_\alpha) \cdot 0$ and $\mu_3 = (s_\alpha s_\beta) \cdot 0$ of $\opH^2(\fraku,k)$ and a weight $\sigma = -\beta$ of $S^1(\fraku^*)$ such that $\mu_1 + \mu_2 = \mu_3 + p\sigma$ but $\sigma \neq 0$. In fact, this nontrivial solution to the weight equation $\mu_1 + \mu_2 = \mu_3 + p \sigma$ corresponds to two elements in the subspace $\calB$ of $\Hbul(U_1,k)$ having a product not in $\calB$. Indeed, let $z_1 = z_2$ be a nonzero weight vector in the one-dimensional weight space $\opH^2(U_1,k)_{s_\beta s_\alpha \cdot 0} \subset \calB$. We have been able to verify by computer calculation in MAGMA \cite{BCP:1997} that $z_1z_2 \neq 0$ in $\Hbul(U_1,k)$, even though every vector in the Lie algebra cohomology ring $\Hbul(\fraku,k)$ squares to zero. Thus, for type $B_2$ we cannot have an isomorphism of graded $T$-algebras $\opH^\bullet(U_1,k) \cong S^\bullet(\fraku^*)^{(1)} \otimes \opH^\bullet(\fraku,k)$ when $p=5$.
\end{example}

\section{Parabolic cohomology} \label{section:paraboliccohomology}

\subsection{}

In this section we apply our previous results to compute the structure of the cohomology space $\opH^\bullet((P_J)_1,L(\lambda))$ when $\lambda \in X^+ \cap \CZbar$. (Recall that $X^+ \cap \CZbar$ is non-empty by our standing assumption that $p > h$.) First we prove a combinatorial lemma. 

\begin{lemma} \label{l1}
Let $\lambda \in X^+ \cap \CZbar$. Suppose that $\lambda$ is weakly $p$-linked to $0$, that is, that $\lambda= w\cdot 0+p\sigma$ for some $\sigma\in X$. Then:
\begin{enumalph} 
\item The weight $\sigma$ is minuscule or zero. The weight $\lambda$ uniquely determines $w$ and $\sigma$.
\item There exists a $T$-module isomorphism,
\begin{equation*}
\opH^{j}(\fraku,L(\lambda))^{T_1}\cong
\begin{cases}
w^{-1}\sigma & \text{if } j=\ell(w) \\
0 & \text{otherwise}
\end{cases}
\end{equation*}
\end{enumalph} 
\end{lemma}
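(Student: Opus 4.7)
For part (a), I would pair the relation $\lambda+\rho = w\rho + p\sigma$ with every positive coroot $\beta^\vee$. The defining inequalities $0 \le (\lambda+\rho,\beta^\vee) \le p$ for $\lambda \in \CZbar$, together with $|(\rho,\gamma^\vee)| \le h-1$ for every root $\gamma$ and the hypothesis $p > h$, force $(\sigma,\beta^\vee) \in \{0,1\}$ for every $\beta \in \Phi^+$. Thus $\sigma$ is dominant with all positive coroot pairings at most $1$, so $\sigma$ is either zero or minuscule. For uniqueness, if $\lambda = w_1\cdot 0 + p\sigma_1 = w_2\cdot 0 + p\sigma_2$, then $w_1\cdot 0 - w_2\cdot 0 = p(\sigma_2-\sigma_1)$ lies in $\Z\Phi$, and the hypothesis $p > h$ makes $p$ coprime to $[X:\Z\Phi]$, hence $\sigma_2 - \sigma_1 \in \Z\Phi$. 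Lemma \ref{lemma:dotactionmodp} applied with the distinguished weight $0 \in C_\Z$ then forces $\sigma_1 = \sigma_2$, after which Lemma \ref{lemma:dotactionfaithful} yields $w_1 = w_2$.

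For part (b), the plan is to first identify $\opH^\bullet(\fraku,L(\lambda))$ as a $T$-module via a Kostant-type formula, and then extract the $T_1$-invariants by selecting the weights lying in $pX$. Under our hypotheses $\lambda$ is $p$-restricted and $L(\lambda) = \opH^0(\lambda) = V(\lambda)$, so the $T$-character of $L(\lambda)$ coincides with the characteristic-zero Weyl character. Invoking the Kostant-type identification available in this regime (obtained for instance from \cite[Theorem 4.2.1]{UGA:2009} with $J = \emptyset$, or by lifting the characteristic-zero Kostant theorem through torsion-freeness of the Koszul cohomology of the $\Z$-form of $V(\lambda)$), we obtain a $T$-module isomorphism
\[
\opH^j(\fraku, L(\lambda)) \;\cong\; \bigoplus_{w' \in W,\ \ell(w')=j} k_{w' \cdot \lambda},
\]
with all weights $w'\cdot\lambda$ distinct by Lemma \ref{lemma:dotactionfaithful}.

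Taking $T_1$-invariants then reduces to keeping those summands whose weights lie in $pX$. Using the factorization $\lambda = w\cdot 0 + p\sigma$ from part (a), we compute $w'\cdot\lambda = (w'w)\cdot 0 + p\,w'\sigma$, which lies in $pX$ if and only if $(w'w)\cdot 0 \in pX$; by Lemma \ref{lemma:dotactionmodp} with $\lambda = 0$ this occurs exactly when $w'w = e$, i.e., $w' = w^{-1}$. The resulting contribution is a single one-dimensional weight space of weight $p\,w^{-1}\sigma$ concentrated in cohomological degree $\ell(w^{-1}) = \ell(w)$, which under the Frobenius identification $T/T_1 \cong T$ yields the one-dimensional $T$-module of weight $w^{-1}\sigma$. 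The main obstacle will be confirming the Kostant-type description on the boundary $\CZbar \setminus C_\Z$, where Theorem \ref{theorem:UJ1computation} does not directly apply; for the $T$-module structure with $J=\emptyset$, however, this reduces to controlling the Koszul Euler characteristic, which is governed by the classical Weyl character of $L(\lambda)=V(\lambda)$.
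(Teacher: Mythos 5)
Your proposal is correct and follows essentially the same route as the paper: part (a) by pairing $\lambda+\rho = w\rho + p\sigma$ against coroots (the paper uses only $\alpha_0^\vee$ together with dominance, you use all of $\Phi^+$, to the same effect) and uniqueness via $p \nmid [X:\Z\Phi]$ combined with Lemmas \ref{lemma:dotactionmodp} and \ref{lemma:dotactionfaithful}; part (b) by citing \cite[Theorem 4.2.1]{UGA:2009} and isolating the unique $w'$ with $w'\cdot\lambda \in pX$. The boundary case you flag as a potential obstacle is not one: the cited Kostant-type theorem holds for all of $\CZbar$, which is exactly how the paper invokes it.
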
 

\begin{proof}
First, since $w\rho+p\sigma = \lambda +\rho \in \rho +X^+$, and since $|(w\rho,\alpha^\vee)| = |(\rho,w^{-1}\alpha^\vee)| \leq h-1 < p$ for all $\alpha \in \Phi$, it follows that $\sigma \in X^+$. Next, since $\lambda \in \CZbar$, we get
\begin{align*}
p(\sigma,\alpha_0^\vee) &= (\lambda- w \cdot 0,\alpha_0^\vee) \\
&= (\lambda+\rho,\alpha_0^\vee)-(\rho,w^{-1}\alpha_0^\vee) \\
&\leq p+(\rho,\alpha_0^\vee)\\
&= p+h-1 < 2p.
\end{align*}
It follows that $(\sigma,\alpha_0^\vee) \in \set{0,1}$. If $(\sigma,\alpha_0^\vee) = 0$, then $\lambda=0$ and $w=1$. Otherwise, $\sigma$ is a minuscule weight. Now suppose that $\lambda=w\cdot 0+p\omega_i=w'\cdot 0+p\omega_j$ for some $w,w' \in W$ and some minuscule weights $\omega_i,\omega_j$ (so $\Phi$ is necessarily of type $A$, $D$ or $E$). Then $p(\omega_j-\omega_i) = w\cdot 0-w'\cdot 0 \in \Z\Phi$. Since $p > h$, we conclude that $\omega_i = \omega_j$, for otherwise $p(\omega_j-\omega_i) \notin \Z\Phi$. Now $w \cdot 0 = w' \cdot 0$, so $w = w'$ by Lemma \ref{lemma:dotactionfaithful}. This proves part (a). Next, by \cite[Theorem 4.2.1]{UGA:2009} we have 
\[
\opH^j(\fraku,L(\lambda))^{T_1} \cong \bigoplus_{\substack{w' \in W \\ \ell(w') = j}} (k_{w' \cdot \lambda})^{T_1}.
\]
Here $k_{w' \cdot \lambda}$ denotes the one-dimensional $T$-module of weight $w' \cdot \lambda$. Suppose $w' \cdot \lambda = p \sigma'$ for some $\sigma' \in X$. Then $0 = w' \cdot \lambda - p\sigma' = w'w \cdot 0 + p(w'\sigma - \sigma')$. By (a), this implies that $w'\sigma - \sigma' = 0$ and that $w'w = 1$, that is, that $w' = w^{-1}$. So $\sigma' = w^{-1} \sigma$ and $\ell(w) = \ell(w') = j$.
\end{proof}

\subsection{}
We can now compute the structure of $\opH^\bullet((P_J)_1,L(\lambda))$ when $\lambda\in X^+ \cap \CZbar$. 

\begin{theorem} \label{theorem:alggrpparabolic}
Let $\lambda\in X^+\cap \CZbar$ and let $J \subseteq \Delta$. 
\begin{enumalph} 
\item $\opH^\bullet((P_J)_1,L(\lambda))=0$ if $\lambda$ is not weakly $p$-linked to $0$. 
\item If $\lambda=w\cdot 0+p\sigma$, then there exists a $P_J$-module isomorphism
\[
\opH^j((P_J)_1,L(\lambda))^{(-1)} \cong
\begin{cases}
\ind_B^{P_J} [S^{\frac{j-\ell(w)}{2}}(\fraku^*)\otimes w^{-1}\sigma] & \text{if } j\equiv l(w)\ \text{mod }2, \\
0 & \text{otherwise.}
\end{cases}
\]
\end{enumalph}
\end{theorem}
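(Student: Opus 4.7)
The plan is to use the Lyndon--Hochschild--Serre spectral sequence for the semidirect product $(P_J)_1 = (L_J)_1 \ltimes (U_J)_1$,
\[
E_2^{i,j} = \opH^i((L_J)_1, \opH^j((U_J)_1, L(\lambda))) \Longrightarrow \opH^{i+j}((P_J)_1, L(\lambda)),
\]
viewed as a spectral sequence of $L_J$-modules after Frobenius untwist, together with the Kumar--Lauritzen--Thomsen (KLT) theorem applied to the Levi subgroup $L_J$. Part~(a) follows from the linkage principle for $(P_J)_1 T$-modules: Lemma~\ref{l1}(a) shows that failure of weak $p$-linkage to $0$ places $L(\lambda)$ in a block distinct from the trivial module, forcing $\opH^\bullet((P_J)_1, L(\lambda)) = \Ext^\bullet_{(P_J)_1}(k, L(\lambda))$ to vanish.

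For part~(b) with $\lambda = w\cdot 0 + p\sigma$, first compute $\opH^\bullet((U_J)_1, L(\lambda))$ using Theorem~\ref{theorem:UJ1computation} (extending its proof to wall weights by substituting the $T_1$-invariant analysis of Lemma~\ref{l1} for Lemma~\ref{lemma:dotactionmodp}). This yields the decomposition
\[
\opH^j((U_J)_1, L(\lambda)) \cong \bigoplus_{2k+l=j} S^k(\fraku_J^*)^{(1)} \otimes \bigoplus_{\substack{w_1 \in {}^JW \\ \ell(w_1) = l}} L_J(w_1 \cdot \lambda).
\]
Since $(L_J)_1$ acts trivially on $S^k(\fraku_J^*)^{(1)}$, the $E_2$-page reduces to $S^k(\fraku_J^*)^{(1)} \otimes \opH^i((L_J)_1, L_J(w_1 \cdot \lambda))$. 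An elementary check using $w_1 \in {}^JW$ shows that each $w_1\cdot\lambda$ is $J$-dominant and lies in the closure of the bottom $L_J$-alcove, so $L_J(w_1\cdot\lambda)=H^0_{L_J}(w_1\cdot\lambda)$; applying KLT to $L_J$ then identifies $\opH^i((L_J)_1, L_J(w_1 \cdot \lambda))$ with an induced module of the form $\ind_{B\cap L_J}^{L_J}[S^{i/2}(\fraku_{L_J}^*) \otimes (w_1\cdot\lambda)]^{(1)}$ in even degrees and $0$ otherwise.

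Collapse of the spectral sequence at $E_2$ follows from a parity argument (responsible for the $j \equiv \ell(w) \pmod 2$ condition) combined with weight-separation analysis of the kind developed in Section~\ref{section:UJ1cohomology}. Identification of the abutment with $\ind_B^{P_J}[S^{(j-\ell(w))/2}(\fraku^*) \otimes w^{-1}\sigma]$ proceeds by rewriting $w_1\cdot\lambda=(w_1w)\cdot 0+p\,w_1\sigma$ and assembling the various $w_1$-contributions via: (i) the $B$-module decomposition $S^\bullet(\fraku^*) \cong S^\bullet(\fraku_J^*) \otimes S^\bullet(\fraku_{L_J}^*)$ arising from $\fraku = \fraku_J \oplus \fraku_{L_J}$; (ii) the natural isomorphism $\ind_B^{P_J}(-) \cong \ind_{B \cap L_J}^{L_J}(-)$ coming from $P_J/B \cong L_J/(B \cap L_J)$; and (iii) the tensor identity to absorb the $P_J$-module factor $S^\bullet(\fraku_J^*)^{(1)}$ into the induction. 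The main obstacle is the wall-weight case in Theorem~\ref{theorem:UJ1computation}, where the Steinberg tensor product / linkage argument there must be refined by appealing to Lemma~\ref{l1}(b) and the minuscule/zero structure of $\sigma$; a secondary difficulty is verifying that the various $w_1$-summands from the spectral sequence really do recombine into a single induced module over $B$, which relies on the concrete weight combinatorics of Section~\ref{section:UJ1cohomology}.
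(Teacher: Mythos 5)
Your route---the Lyndon--Hochschild--Serre spectral sequence for $(P_J)_1 = (L_J)_1 \ltimes (U_J)_1$, with the inner term supplied by Theorem \ref{theorem:UJ1computation} and the outer term by Kumar--Lauritzen--Thomsen applied to the Levi---is genuinely different from the paper's, which never touches $(U_J)_1$-cohomology of $L(\lambda)$: it computes $\opH^\bullet(B_1,L(\lambda)) = \opH^\bullet(U_1,L(\lambda))^{T_1}$ and climbs to $(P_J)_1$ via the spectral sequence $R^i\ind_B^{P_J}[\opH^j(B_1,L(\lambda))^{(-1)}] \Rightarrow \opH^{i+j}((P_J)_1,L(\lambda))^{(-1)}$, using KLT only to kill the higher derived functors of induction. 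The difference matters, because your plan has a genuine gap exactly at the step you flag as ``the main obstacle'': the decomposition of $\opH^j((U_J)_1,L(\lambda))$ is proved in the paper only for $\lambda \in C_\Z$, and for a wall weight $\lambda \in \CZbar \setminus C_\Z$ the collapsing argument for the spectral sequence \eqref{eq:FPspectralsequence} genuinely breaks: the remark following Lemma \ref{lemma:dotactionmodp} exhibits a coincidence $w_1\cdot\lambda = w_2\cdot\lambda + p\sigma$ with $\sigma \neq 0$, which is precisely what the differentials-vanish argument must exclude. Your proposed repair---substituting Lemma \ref{l1}(b)---does not address this, since Lemma \ref{l1} only controls the $T_1$-invariant (i.e., $p$-divisible) weight spaces of $\opH^\bullet(\fraku,L(\lambda))$, whereas collapse of \eqref{eq:FPspectralsequence} requires ruling out coincidences among \emph{arbitrary} weights $w_1\cdot\lambda$ and $w_2\cdot\lambda + p\sigma$. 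Without collapse, $E_\infty$ could be a proper subquotient of $E_2$, so the displayed formula for $\opH^j((U_J)_1,L(\lambda))$---and hence your entire $E_2$-page---is unjustified and possibly false.

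This is the difficulty the paper's argument is built to sidestep: taking $T_1$-invariants \emph{before} analyzing the Friedlander--Parshall spectral sequence, Lemma \ref{l1}(b) concentrates $\opH^\bullet(\fraku,L(\lambda))^{T_1}$ in the single degree $\ell(w)$ and single weight $pw^{-1}\sigma$, so the invariant spectral sequence has one nonzero row and collapses for trivial reasons, wall weight or not. To salvage your approach you would need either an independent proof of the wall-weight extension of Theorem \ref{theorem:UJ1computation}, or a reorganization in which only $(T\cap L_J)_1$-invariant information is ever used---which in effect reproduces the paper's proof. The remaining pieces of your outline (vanishing of all but the coset representative of $W_Jw^{-1}$ by linkage for $(L_J)_1$, the parity collapse, and the recombination via Cline--Parshall--Scott and the tensor identity) are plausible, but they all sit downstream of the unproven input.
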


\begin{proof}
Part (a) is established by the argument in \cite[Remark 2.7(b)]{FP:1986}, using the Linkage Principle for $(L_J)_1$. Next suppose that $\lambda = w \cdot 0 + p \sigma$ for some $w \in W$ and $\sigma \in X$. We have $R^m \ind_B^{P_J} L(\lambda) \cong L(\lambda) \otimes R^m \ind_B^{P_J}(k) =0$ for all $m>0$, so by \cite[II.12.2]{Jan:2003}, there exists a spectral sequence
\begin{equation} \label{eq:parabolicspecseq}
E_2^{i,j} = R^i \ind_B^{P_J} [ \opH^j(B_1,L(\lambda))^{(-1)} ] \Rightarrow 
\opH^{i+j}((P_J)_1,L(\lambda))^{(-1)}.
\end{equation}
Also, $\opH^\bullet(B_1,L(\lambda)) \cong \opH^\bullet(U_1,L(\lambda))^{T_1}$ by \cite[I.6.9(3)]{Jan:2003}, so by Lemma \ref{l1}(b), 
\begin{align*}
R^i \ind_B^{P_J} [ \opH^j(B_1,L(\lambda))^{(-1)} ] &\cong R^i \ind_B^{P_J}[(\opH^j(U_1,L(\lambda))^{T_1})^{(-1)}] \\
&\cong R^i \ind_B^{P_J} [(\oplus_{2a+b=j} \, S^a(\fraku^*)^{(1)} \otimes \opH^b(\fraku,L(\lambda))^{T_1})^{(-1)}] \\
&\cong R^i \ind_B^{P_J} [S^{\frac{j-\ell(w)}{2}}(\fraku^*)\otimes w^{-1}\sigma],
\end{align*}
and the spectral sequence can be rewritten as
\[
E_2^{i,j} = R^i \ind_B^{P_J} [S^{\frac{j-\ell(w)}{2}}(\fraku^*) \otimes w^{-1}\sigma] \Rightarrow \opH^{i+j}((P_J)_1,L(\lambda))^{(-1)}.
\]

Next, we claim that $w^{-1}\sigma$ is antidominant, that is, that $w^{-1} \sigma \in -X^+$. To see this, first observe that for any $\alpha \in \Delta$, $(w^{-1} \cdot \lambda,\alpha^\vee) = p(w^{-1}\sigma,\alpha^\vee) \in p\Z$. Next observe that
\[
(w^{-1}\cdot \lambda,\alpha^\vee) = (\lambda + \rho, w \alpha^\vee) - 1 < p
\]
because $\lambda \in \CZbar$. Then for all $\alpha \in \Delta$, $(w^{-1} \cdot \lambda,\alpha^\vee) \leq 0$, so also $(w^{-1}\sigma,\alpha^\vee) \leq 0$, i.e., $w^{-1}\sigma \in -X^+$.

Now set $B' = B \cap L_J$, $U' = U \cap L_J$, and $\fraku' = \Lie(U')$. Then $\fraku = \fraku' \oplus \fraku_J$ and $S^\bullet(\fraku^*) \cong S^\bullet(\fraku'^*)\otimes S^\bullet(\fraku_J^*)$ are $B'$-module decompositions, and the action of $B'$ on $S^\bullet(\fraku_J^*)$ lifts to the natural action of $L_J$ on $S^\bullet(\fraku_J^*)$. By \cite[Example 4.2(a)]{Cline:1983}, there exists an isomorphism of $L_J$-modules
\[
R^i \ind_B^{P_J} [S^\bullet(\fraku^*) \otimes w^{-1}\sigma] \cong R^i \ind_{B'}^{L_J} [S^\bullet(\fraku^*) \otimes w^{-1}\sigma],
\]
and then by the generalized tensor identity we get
\[
R^i \ind_{B'}^{L_J} [S^\bullet(\fraku^*)\otimes w^{-1}\sigma] \cong S^\bullet(\fraku_J^*)\otimes R^i \ind_{B'}^{L_J} [S^\bullet(\fraku'^*)\otimes w^{-1}\sigma]
\]
Now $R^i \ind_{B'}^{L_J} [S^\bullet(\fraku'^*)\otimes w^{-1}\sigma] = 0$ if $i > 0$ by the calculations of Kumar, Lauritzen, and Thomsen \cite[Theorem 2]{KLT:1999}.\footnote{Kumar, Lauritzen, and Thomsen choose their Borel subgroup to correspond to the negative roots, while we have chosen ours to correspond to the positive roots; this is why we checked, for example, that $w^{-1}\sigma \in -X^+$ rather than $w^{-1}\sigma \in X^+$.} Consequently, the spectral sequence \eqref{eq:parabolicspecseq} collapses at the $E_2$-page, and we obtain the $P_J$-module isomorphism $\opH^j((P_J)_1,L(\lambda))^{(-1)} \cong \ind_B^{P_J} [S^{\frac{j-\ell(w)}{2}}(\fraku^*) \otimes w^{-1}\sigma]$.
\end{proof}

\section{Results for Quantum Groups} \label{section:quantumanalogs}

In this section we adapt our main results to quantum groups. New techniques are necessary for quantum groups because of the lack of a quantum analog for the spectral sequence \eqref{eq:FPspectralsequence}. The arguments given here for quantum groups can be adapted to work for algebraic groups as well, and thus provide different proofs of the earlier theorems. The notation we use in this section is generally the same as that in \cite{Bendel:2011}, though to maintain consistency with Sections \ref{section:UJ1cohomology}--\ref{section:paraboliccohomology} we define our Borel subalgebras to correspond to positive root vectors and not negative root vectors.

\subsection{Notation and preliminaries}

Let $q$ be an indeterminate. The quantized enveloping algebra $\Uq = \Uqg$ is the $\C(q)$-algebra defined by the generators $\{E_\alpha,F_\alpha,K_\alpha,K_\alpha^{-1}:\alpha \in \Delta\}$ and the relations in \cite[4.3]{Jantzen:1996}. The Hopf algebra structure maps for $\Uq$ are described in \cite[4.8]{Jantzen:1996}.

Let $\ell$ be an odd positive integer, with $\ell$ coprime to 3 if the root system $\Phi$ of $\g$ is of type $G_2$. Let $\zeta \in \C$ be a primitive $\ell$-th root of unity. Set $\upA = \Z[q,q^{-1}]$. Then the field $\C$ is naturally an $\upA$-module under the specialization $q \mapsto \zeta$. There exist two $\upA$-forms in $\Uq$, the Lusztig divided power integral form $\Ua$ (defined in \cite[\S 1.3]{Lusztig:1990}), and the De~Concini--Kac integral form $\cua$ (defined in \cite[\S 1.5]{De-Concini:1990}). Set $\Uc = \Ua \otimes_\upA \C$, and define $\Uz = \Uzg$ to be the quotient of $\Uc$ by the two-sided ideal generated by the set $\{K_\alpha^\ell \otimes 1 - 1 \otimes 1: \alpha \in \Delta\} \subset \Uc$, i.e.,
\[
\Uz = \Uzg = \Uc/\subgrp{K_\alpha^\ell \otimes 1 - 1 \otimes 1: \alpha \in \Delta}.
\]
Similarly, set $\cuc = \cua \otimes_\upA \C$, and define $\cuz = \cuzg$ to be the quotient of $\cuc$ by the two sided ideal generated by the set $\{K_\alpha^\ell \otimes 1 - 1 \otimes 1: \alpha \in \Delta\} \subset \cuc$.

By abuse of notation, we denote from now on the generators for $\Uq$ as well as their images in $\Uz$ and $\cuz$ by the same symbols. We follow the usual convention (cf.\ \cite[\S 1.4]{Lusztig:1990}) of writing the superscripts $+$, $-$, and $0$ to denote the positive, negative, and toral subalgebras of $\Uq$, $\Uz$, and $\cuz$. The Borel subalgebras of $\Uz$ and $\cuz$ are then defined by $\Uzb = \Uz^+ \Uzo$ and $\cuzb = \cuz^+ \cuz^0$. We also set $\cuzu = \cuz^+$. For $J \subseteq \Delta$, there also exist subalgebras of $\Uz$ and $\cuz$ corresponding to the Lie algebras $\frakp_J$, $\fraku_J$, and $\frakl_J = \Lie(L_J)$; see \cite[\S 2.5]{Bendel:2011}.

The elements $\set{E_\alpha,F_\alpha,K_\alpha:\alpha \in \Delta} \subset \Uz$ generate a finite-dimensional Hopf-subalgebra of $\Uz$, called the small quantum group and denoted $\uz = \uzg$. Set $\uzu = \uz^+$, and write $\uzb = \uz^+ \uzo$ for the Borel subalgebra of $\uzg$. For each positive root $\gamma \in \Phi^+$, there exist root vectors $E_\gamma \in \Uq^+$ and $F_\gamma \in \Uq^-$, defined in terms of certain braid group operators on $\Uq$. Let $\calZ$ be the subalgebra of $\cuz$ generated by the set $\{E_\gamma^\ell,F_\gamma^\ell: \gamma \in \Phi^+ \} \subset \cuz$. Then $\calZ$ is a central polynomial subalgebra of $\cuz$, and $\cuz$ is free and finitely-generated over $\calZ$ \cite[\S 3.1, 3.3]{De-Concini:1990}. The inclusion of $\upA$-forms $\cua \rightarrow \Ua$ induces algebra isomorphisms $\cuz//\calZ \cong \uz$ and $\cuzb//\calZ^+ \cong \uzb$.\footnote{Let $A$ be an augmented $k$-algebra, and let $B$ be a subalgebra of $A$. Let $B_+$ denote the augmentation ideal of $B$. We say that $B$ is normal in $A$ if $AB_+ = B_+A$. In this case, we write $A//B$ for the quotient $A/(AB_+)$. In particular, we use this notation even if $A$ and $B$ are not Hopf algebras.} The quotient map $\cuzb \rightarrow \uzb$ restricts to an isomorphism $\cuz^0 \cong \uzo$.

\begin{lemma} \label{lemma:ellthpowerscoproducts}
The coproduct $\Delta$ of $\cuz$ satisfies, for all $\gamma \in \Phi^+$,
\begin{align*}
\Delta(E_\gamma^\ell) &= E_\gamma^\ell \otimes 1 + 1 \otimes E_\gamma^\ell \qquad \text{and} \\
\Delta(F_\gamma^\ell) &= F_\gamma^\ell \otimes 1 + 1 \otimes F_\gamma^\ell.
\end{align*}
In particular, the subalgebras $\calZ$ and $\calZ^+$ of $\cuz$ are bialgebras.
\end{lemma}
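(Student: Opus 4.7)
The plan is to prove the coproduct formulas by first handling simple roots via the quantum binomial theorem and then extending to non-simple positive roots. For a simple root $\alpha$, one has $\Delta(E_\alpha) = E_\alpha \otimes 1 + K_\alpha \otimes E_\alpha$ in $\cuc$ (inherited from $\Uq$), and the two summands satisfy the $q$-commutation $(K_\alpha \otimes E_\alpha)(E_\alpha \otimes 1) = q_\alpha^2 (E_\alpha \otimes 1)(K_\alpha \otimes E_\alpha)$. Applying the quantum binomial theorem in the skew polynomial ring they generate and then specializing $q \mapsto \zeta$ gives
\[
\Delta(E_\alpha^\ell) = \sum_{k=0}^{\ell} \binom{\ell}{k}_{\zeta_\alpha^2} (E_\alpha \otimes 1)^k (K_\alpha \otimes E_\alpha)^{\ell-k}.
\]
Because $\ell$ is odd (and coprime to $3$ in type $G_2$), the primitive root $\zeta_\alpha^2$ still has order $\ell$, so the Gauss binomials $\binom{\ell}{k}_{\zeta_\alpha^2}$ vanish for $0 < k < \ell$. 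Only the endpoint terms survive, giving $\Delta(E_\alpha^\ell) = E_\alpha^\ell \otimes 1 + K_\alpha^\ell \otimes E_\alpha^\ell$, which reduces in $\cuz$ to $E_\alpha^\ell \otimes 1 + 1 \otimes E_\alpha^\ell$ by the imposed relation $K_\alpha^\ell = 1$.

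For a non-simple positive root $\gamma$, the root vector $E_\gamma$ is constructed by applying a sequence of Lusztig braid operators $T_w$ to a simple $E_{\alpha_i}$, and the coproduct takes the form $\Delta(E_\gamma) = E_\gamma \otimes 1 + K_\gamma \otimes E_\gamma + (\text{correction})$, where the correction lies in a subspace spanned by products of root vectors of strictly lower height. The cleanest path is to invoke the classical result of De Concini--Kac (cf.\ \cite[\S 3.1]{De-Concini:1990}), which establishes that the subalgebra of $\cuc$ generated by $\set{E_\gamma^\ell, F_\gamma^\ell, K_\gamma^{\pm\ell}: \gamma \in \Phi^+}$ is a central Hopf subalgebra with the same primitive-like formulas $\Delta(E_\gamma^\ell) = E_\gamma^\ell \otimes 1 + K_\gamma^\ell \otimes E_\gamma^\ell$ and $\Delta(F_\gamma^\ell) = F_\gamma^\ell \otimes K_\gamma^{-\ell} + 1 \otimes F_\gamma^\ell$. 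Since passing from $\cuc$ to $\cuz$ identifies every $K_\gamma^\ell$ with $1$, the stated formulas follow immediately.

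For the final assertion, the primitive coproduct formulas imply $\Delta(\calZ) \subseteq \calZ \otimes \calZ$ and $\Delta(\calZ^+) \subseteq \calZ^+ \otimes \calZ^+$, since $\calZ$ (resp.\ $\calZ^+$) is the polynomial subalgebra generated by the primitive elements $\set{E_\gamma^\ell, F_\gamma^\ell}$ (resp.\ $\set{E_\gamma^\ell}$). Since also $\epsilon(E_\gamma^\ell) = \epsilon(F_\gamma^\ell) = 0$, the subalgebras $\calZ$ and $\calZ^+$ are closed under the counit as well, making them sub-bialgebras of $\cuz$.

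The main obstacle will be handling the correction terms in $\Delta(E_\gamma)$ when $\gamma$ is not simple: a direct quantum binomial expansion does not suffice because the lower-height terms no longer $q$-commute with $E_\gamma \otimes 1$ and $K_\gamma \otimes E_\gamma$ in an elementary way. The force of the De Concini--Kac analysis is precisely that, at a root of unity, these correction terms nevertheless conspire to contribute nothing to the $\ell$-th power, a fact most efficiently imported rather than rederived.
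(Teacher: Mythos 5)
Your proof is correct and follows essentially the same route as the paper's: the simple-root case is handled by the quantum binomial theorem at a root of unity (the paper cites \cite[4.9(4)]{Jantzen:1996} for exactly this), the non-simple case is imported from the literature, and one then passes to $\cuz$ where $K_\gamma^\ell = 1$. The only real difference is the precise mechanism invoked for non-simple roots: the paper deduces the identities from the stability of $\calZ_\C$ under the braid group automorphisms \cite[Proposition 3.3]{De-Concini:1990} combined with \cite[Proposition C.5(2)]{AJS:1994}, rather than quoting the De~Concini--Kac Hopf-subalgebra theorem wholesale.
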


\begin{proof}
Let $\calZ_\C$ be the subalgebra of $\cuc$ generated by $\{ E_\gamma^\ell,F_\gamma^\ell,K_\alpha^{\pm \ell}: \gamma \in \Phi^+, \alpha \in \Delta \}$. For $\alpha \in \Delta$, we have $\Delta(K_\alpha^\ell) = K_\alpha^\ell \otimes K_\alpha^\ell$. If $\gamma \in \Phi^+$ is a simple root, then also
\begin{align}
\Delta(E_\gamma^\ell) &= E_\gamma^\ell \otimes 1 + K_\gamma^\ell \otimes E_\gamma^\ell \quad \text{and} \label{eq:Eellcoproduct} \\
\Delta(F_\gamma^\ell) &= F_\gamma^\ell \otimes K_\gamma^{-\ell} + 1 \otimes F_\gamma^\ell \label{eq:Fellcoproduct}
\end{align}
in $\cuc$ by \cite[4.9(4)]{Jantzen:1996}. The subalgebra $\calZ_\C$ of $\cuc$ is stable under the braid group automorphisms of $\cuc$ by \cite[Proposition 3.3]{De-Concini:1990}, so the identities \eqref{eq:Eellcoproduct} and \eqref{eq:Fellcoproduct} follow for arbitrary $\gamma \in \Phi^+$ from \cite[Proposition C.5(2)]{AJS:1994}. Thus, $\calZ_\C$ and $\calZ_\C^+$ are sub-bialgebras of $\cuc$. The lemma now follows because $\calZ$ and $\calZ^+$ are the images of $\calZ_\C$ and $\calZ_\C^+$ in $\cuz$.
\end{proof}

Let $\U(\g)$ be the universal enveloping algebra for $\g$, and let $F_\zeta: \Uz \rightarrow \U(\g)$ be the quantum Frobenius morphism defined by Lusztig in \cite[\S 8]{Lusztig:1990}. Then $F_\zeta$ induces the Hopf-algebra isomorphism $\Uz//\uz \cong \U(\g)$. Similarly, let $\U(\frakb)$ be the universal enveloping algebra for $\frakb$. Then $F_\zeta$ restricts to a map $\Uzb \rightarrow \U(\frakb)$, and induces the isomorphism $\Uzb//\uzb \cong \U(\frakb)$. Given a left $\U(\g)$- (resp.\ $\U(\frakb)$)-module $M$, write $M^{(1)}$ for $M$ considered as a $\Uz$- (resp.\ $\Uzb$)-module via $F_\zeta$.

Given $\lambda \in X^+$, let $\Lz(\lambda)$ be the irreducible integrable $\Uz$-module (i.e., the irreducible type-1 integrable $\Uc$-module) of highest weight $\lambda$. Similarly, given $\lambda \in X_J^+$, let $L_J^\zeta(\lambda)$ be the simple integrable $\Uzlj$-module of highest weight $\lambda$. Define the fundamental alcove $C_\Z$ for $\Uz$ by replacing $p$ by $\ell$ in the definition for $C_\Z$ given in Section \ref{subsection:notation}.

The small quantum torus $\uzo \subset \uzg$ is a semisimple algebra, isomorphic to the group ring over $\C$ for the finite group $(\Z/\ell\Z)^n$. The set $X_\ell$ of $\ell$-restricted dominant weights is defined by $X_\ell = \set{\mu \in X^+:(\mu,\alpha^\vee) < \ell \text{ for all $\alpha \in \Delta$}}$. The irreducible $\uzo$-modules are parametrized by the set $X/\ell X$. Equivalently, the irreducible $\uzo$-modules are parametrized by the set $X_\ell$, which forms a set of coset representatives for $\ell X$ in $X$.

\subsection{Weight space decomposition}

Let $\lambda \in X^+$. Then $\Lz(\lambda)$ is by restriction a $\uzb$-module. Since $\uzb$ is flat as a right $\uzu$-module, and since $\uzu$ is normal in $\uzb$ with quotient $\uzb//\uzu \cong \uzo$, the cohomology space $\opH^\bullet(\uzu,\Lz(\lambda))$ is naturally a graded left $\uzo$-module. In this section we describe the $\uzo$-weight space decomposition of $\opH^\bullet(\uzu,\Lz(\lambda))$.

\begin{lemma} \label{lemma:admissibleweights}
Let $\lambda \in X^+$ and $\mu \in X$. Then $\Hom_{\uzo}(\mu,\opH^\bullet(\uzu,\Lz(\lambda))) = 0$ unless $\mu = w \cdot \lambda + \ell \sigma$ for some $w \in W$ and some $\sigma \in X$.
\end{lemma}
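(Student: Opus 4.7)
The plan is to translate the statement into a vanishing result for an Ext group over the full small quantum group $\uzg$, where the Linkage Principle can be applied. Because $\uzo$ is semisimple and $\uzb = \uzu \rtimes \uzo$, the Lyndon--Hochschild--Serre spectral sequence for $\uzu \subset \uzb$ collapses and identifies the weight multiplicity as
\[
\Hom_{\uzo}(\mu, \opH^\bullet(\uzu, \Lz(\lambda))) \cong \Ext^\bullet_{\uzb}(\C_\mu, \Lz(\lambda)),
\]
where $\C_\mu$ denotes the one-dimensional $\uzb$-module on which $\uzu$ acts trivially and $\uzo$ acts by the character determined by $\mu \bmod \ell X$. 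The quantum triangular decomposition exhibits $\uzg$ as a free right $\uzb$-module, so Frobenius reciprocity gives a further isomorphism
\[
\Ext^\bullet_{\uzb}(\C_\mu, \Lz(\lambda)) \cong \Ext^\bullet_{\uzg}(Z(\mu), \Lz(\lambda)),
\]
where $Z(\mu) := \uzg \otimes_{\uzb} \C_\mu$ is the baby Verma module of highest weight $\mu$.

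To apply the Linkage Principle, I would use the quantum Steinberg tensor product theorem to write $\Lz(\lambda) \cong \Lz(\lambda_0) \otimes \Lz(\lambda_1)^{[1]}$ as $\Uz$-modules, with $\lambda = \lambda_0 + \ell \lambda_1$ and $\lambda_0 \in X_\ell$. Since the quantum Frobenius morphism factors through $\Uz//\uz$, the twisted factor $\Lz(\lambda_1)^{[1]}$ restricts to a trivial $\uzg$-module, so $\Lz(\lambda_0)$ is the only composition factor of $\Lz(\lambda)|_{\uzg}$. The Linkage Principle for $\uzg$ at an $\ell$-th root of unity (Andersen--Polo--Wen, which applies under our standing hypotheses on $\ell$) then implies that $\Ext^\bullet_{\uzg}(Z(\mu), \Lz(\lambda_0))$ can be nonzero only when $\mu$ is linked to $\lambda_0$ via the affine Weyl group $W_\ell = W \ltimes \ell \Z\Phi$; that is, $\mu \in W \cdot \lambda_0 + \ell \Z\Phi$. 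Substituting $\lambda_0 = \lambda - \ell \lambda_1$ yields $\mu = w \cdot \lambda + \ell \sigma$ for some $w \in W$ and $\sigma \in X$, as required.

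The main potential obstacle is invoking the Linkage Principle in precisely the form needed, but under our standing assumption that $\ell$ is odd and coprime to $3$ in type $G_2$, this is exactly what has been established for small quantum groups at a root of unity. A minor bookkeeping point is that $\mu$ need not originally lie in $X_\ell$, so I would first replace $\mu$ by its representative $\tilde\mu \in X_\ell$ with $\tilde\mu \equiv \mu \pmod{\ell X}$; since any such discrepancy in $\ell X$ is absorbed into the term $\ell \sigma$, the final conclusion is unaffected.
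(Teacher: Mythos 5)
Your proposal is correct and follows essentially the same route as the paper: identify the $\mu$-isotypic component with $\Ext^\bullet_{\uzb}(\C_\mu,\Lz(\lambda))$ via the collapsing LHS spectral sequence for $\uzu \subset \uzb$ (using semisimplicity of $\uzo$), transfer to $\Ext^\bullet_{\uzg}(Z(\mu),\Lz(\lambda))$ by tensor induction from $\uzb$ to $\uzg$, and conclude by the Linkage Principle for $\uzg$. Your extra care with the Steinberg tensor product decomposition of $\Lz(\lambda)|_{\uzg}$ is a harmless refinement of the paper's more terse final step, since the discrepancy $\ell\lambda_1$ is absorbed into $\ell\sigma$ exactly as you note.
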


\begin{proof}
The subalgebra $\uzu$ acts trivially on the one-dimensional $\uzb$-module $-\mu$, so we get
\begin{align*}
\Hom_{\uzo}(\mu,\opH^\bullet(\uzu,\Lz(\lambda))) &\cong \Hom_{\uzo}(k,\opH^\bullet(\uzu,\Lz(\lambda)) \otimes -\mu) \\
&\cong \Hom_{\uzo}(k,\opH^\bullet(\uzu,\Lz(\lambda) \otimes -\mu)) \\
&\cong \opH^\bullet(\uzb,\Lz(\lambda) \otimes -\mu)
\end{align*}
The last isomorphism follows by applying the LHS spectral sequence for the algebra $\uzb$ and its normal subalgebra $\uzu$, and by using the fact that $\uzo$ is a semisimple algebra. Now given a $\uzb$-module $M$, let $Z'(M) := \uzg \otimes_{\uzb} M$ be the module obtained via tensor induction from $\uzb$ to $\uzg$. Then
\[
\opH^\bullet(\uzb,\Lz(\lambda) \otimes -\mu) \cong \Ext_{\uzb}^\bullet(\mu,\Lz(\lambda)) \cong \Ext_{\uzg}^\bullet(Z'(\mu),\Lz(\lambda)).
\]
By the Linkage Principle for $\uzg$, the last $\Ext$-group is zero unless $\mu = w \cdot \lambda + \ell \sigma$ for some $w \in W$ and some $\sigma \in X$. 
\end{proof}

\begin{lemma} \label{lemma:dotactionmodell}
Let $\lambda \in C_\Z$. Assume that $\ell$ is odd, that $\ell$ is coprime to $n+1$ if $\Phi$ is of type $A_n$, and that $\ell$ is coprime to $3$ if $\Phi$ is of type $E_6$ or $G_2$. Suppose $w_1 \cdot \lambda = w_2 \cdot \lambda + \ell \sigma$ for some $w_1,w_2 \in W$ and some $\sigma \in X$. Then $\sigma =0$ and $w_1 = w_2$.
\end{lemma}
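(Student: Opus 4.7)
The plan is to reduce the claim to Lemma~\ref{lemma:dotactionmodp} (more precisely, to its analog with $\ell$ in place of $p$, whose proof is identical since both rely only on the simple transitivity of the appropriate affine Weyl group on alcoves). The extra work compared with Lemma~\ref{lemma:dotactionmodp} is exactly the passage from $\sigma \in X$ to $\sigma \in \Z\Phi$, which is where the coprimality hypotheses on $\ell$ will enter.

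First I would rewrite the hypothesis as
\[
w_1(\lambda+\rho) - w_2(\lambda+\rho) = \ell\sigma,
\]
and observe that for any $w \in W$ and any $\mu \in X$ one has $w\mu - \mu \in \Z\Phi$. Applying this with $\mu = \lambda+\rho$ to both $w_1$ and $w_2$ shows that the left-hand side lies in $\Z\Phi$, hence $\ell\sigma \in \Z\Phi$. Since $\sigma \in X$, its class $\overline{\sigma} \in X/\Z\Phi$ satisfies $\ell \cdot \overline{\sigma} = 0$ in the finite abelian group $X/\Z\Phi$.

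Next I would invoke the hypotheses on $\ell$ to conclude $\overline{\sigma} = 0$, i.e., $\sigma \in \Z\Phi$. The order of the fundamental group $X/\Z\Phi$ is $n+1$ for type $A_n$, $3$ for $E_6$, a power of $2$ for types $B_n$, $C_n$, $D_n$, $E_7$, and $1$ for $E_8$, $F_4$, $G_2$. The standing assumptions that $\ell$ is odd and coprime to $3$ in type $G_2$, together with the extra hypotheses in this lemma (coprime to $n+1$ in type $A_n$ and coprime to $3$ in type $E_6$), ensure in every case that $\gcd(\ell,|X/\Z\Phi|) = 1$. Multiplication by $\ell$ is therefore an automorphism of $X/\Z\Phi$, and $\ell \cdot \overline{\sigma} = 0$ forces $\overline{\sigma} = 0$.

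With $\sigma \in \Z\Phi$ in hand, I would apply the $\ell$-analog of Lemma~\ref{lemma:dotactionmodp} (whose proof transfers verbatim, since $\lambda \in C_\Z$ lies in the interior of the fundamental alcove for the affine Weyl group $W \ltimes \ell\Z\Phi$) to conclude that $\sigma = 0$ and hence $w_1 \cdot \lambda = w_2 \cdot \lambda$. Lemma~\ref{lemma:dotactionfaithful} then gives $w_1 = w_2$. The only delicate step is the second one: checking case-by-case that the coprimality assumptions in the hypothesis are exactly what is needed for $\ell$ to act invertibly on $X/\Z\Phi$.
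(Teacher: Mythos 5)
Your proof is correct and follows essentially the same route as the paper's: establish $\ell\sigma \in \Z\Phi$, use the coprimality of $\ell$ with $|X/\Z\Phi|$ to conclude $\sigma \in \Z\Phi$, and then apply the $\ell$-analogues of Lemmas~\ref{lemma:dotactionmodp} and~\ref{lemma:dotactionfaithful}. The only cosmetic difference is that you deduce $\ell\sigma \in \Z\Phi$ from the general fact that $w\mu \equiv \mu \pmod{\Z\Phi}$ for all $w \in W$, $\mu \in X$, whereas the paper invokes the strongly-dominant-weight argument of \cite[Lemma 13.2A]{Hum:1978}; both are valid.
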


\begin{proof}
Suppose $w_1 \cdot \lambda = w_2 \cdot \lambda + \ell \sigma$. Then
\[
(\lambda + \rho) - w_1^{-1}w_2(\lambda + \rho) = \ell(w_1^{-1}\sigma).
\]
Since $\lambda + \rho$ is a strongly dominant weight, the left-hand side of the above equation is a sum of positive roots by \cite[Lemma 13.2A]{Hum:1978}. In particular, $\ell(w_1^{-1}\sigma) \in \Z\Phi$. By assumption, $\ell$ does not divide the order of the finite group $X/(\Z\Phi)$, so $w_1^{-1} \sigma \in \Z\Phi$, and hence $\sigma \in \Z\Phi$. Then $\sigma = 0$ by Lemma \ref{lemma:dotactionmodp} (the lemma remains true if the prime $p$ is replaced by an arbitrary integer), and hence $w_1 = w_2$ by Lemma \ref{lemma:dotactionfaithful}.
\end{proof}

\begin{remark} \label{remark:dotactionmodell}
If $\lambda = 0$, then the conclusion to Lemma \ref{lemma:dotactionmodell} also holds if $\ell$ is odd, if $\ell > h$, and if $\ell$ is coprime to $3$ if $\Phi$ is of type $G_2$; for details see the argument in the proof of \cite[Theorem 2.5]{GK:1993}.
\end{remark}

\begin{corollary} \label{corollary:weightspacedecomposition}
Let $\lambda \in C_\Z$. Assume that $\ell$ is odd, that $\ell$ is coprime to $n+1$ if $\Phi$ has type $A_n$, and that $\ell$ is coprime to $3$ if $\Phi$ has type $E_6$ or $G_2$. Then
\[
\opH^\bullet(\uzu,\Lz(\lambda)) \cong \bigoplus_{w \in W} \Hom_{\uzo}(w \cdot \lambda,\opH^\bullet(\uzu,\Lz(\lambda))).
\]
\end{corollary}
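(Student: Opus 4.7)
The plan is to combine the semisimplicity of $\uzo$ with the two preceding lemmas. Lemma \ref{lemma:admissibleweights} restricts the weights that can appear in $\opH^\bullet(\uzu,\Lz(\lambda))$ to those of the form $\mu \equiv w \cdot \lambda \pmod{\ell X}$, and Lemma \ref{lemma:dotactionmodell} shows, under the stated hypotheses on $\ell$ and $\lambda$, that the assignment $w \mapsto \overline{w \cdot \lambda}$ is an injection $W \hookrightarrow X/\ell X$. Together these will identify the nonzero isotypic components of $\opH^\bullet(\uzu,\Lz(\lambda))$ with a set in bijection with $W$.

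Concretely, I would first fix $X_\ell$ as a set of coset representatives for $\ell X$ in $X$, so that the (one-dimensional) irreducible $\uzo$-modules are indexed by $X_\ell$. Semisimplicity of $\uzo$ then yields the canonical isotypic decomposition
\[
\opH^\bullet(\uzu,\Lz(\lambda)) \cong \bigoplus_{\mu \in X_\ell} \Hom_{\uzo}(\mu,\opH^\bullet(\uzu,\Lz(\lambda))) \otimes \mu.
\]
Applying Lemma \ref{lemma:admissibleweights} kills every summand whose index is not congruent to some $w \cdot \lambda$ modulo $\ell X$, and Lemma \ref{lemma:dotactionmodell} puts the surviving indices in bijection with $W$, via a unique $\mu_w \in X_\ell$ for each $w \in W$. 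Since weights differing by an element of $\ell X$ define the same simple $\uzo$-module, each surviving multiplicity space $\Hom_{\uzo}(\mu_w,-)$ coincides with $\Hom_{\uzo}(w \cdot \lambda,-)$, and reindexing by $W$ produces the formula stated in the corollary.

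I do not expect a substantive obstacle: the substantive content is entirely packaged into Lemmas \ref{lemma:admissibleweights} and \ref{lemma:dotactionmodell}. The only point requiring a bit of care is the tensor factor $\otimes \mu$ appearing in the isotypic decomposition, which must be absorbed into the notational convention of the corollary statement (where $\Hom_{\uzo}(w \cdot \lambda,-)$ is read as the full isotypic component, namely a copy of the multiplicity space carrying the $\uzo$-action of weight $w \cdot \lambda$). With that convention in place, the result follows directly from the preceding two lemmas.
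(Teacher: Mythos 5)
Your proposal is correct and follows exactly the paper's argument: decompose $\opH^\bullet(\uzu,\Lz(\lambda))$ into $\uzo$-isotypic components using semisimplicity, eliminate all but the weights $w\cdot\lambda$ via Lemma \ref{lemma:admissibleweights}, and use Lemma \ref{lemma:dotactionmodell} to see these are pairwise incongruent modulo $\ell X$. The notational point about the isotypic component versus the multiplicity space is also acknowledged in the paper (as a "slight abuse of notation"), so nothing is missing.
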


\begin{proof}
The graded $\uzo$-module $\opH^\bullet(\uzu,\Lz(\lambda))$ decomposes as a direct sum of simple $\uzo$-modules. By a slight abuse of notation, we write this decomposition as
\[
\opH^\bullet(\uzu,\Lz(\lambda)) \cong \bigoplus_{\mu \in X/\ell X} \Hom_{\uzo}(\mu,\opH^\bullet(\uzu,\Lz(\lambda))).
\]
By Lemma \ref{lemma:admissibleweights}, the only non-zero summands are those for which $\mu \equiv w \cdot \lambda \mod \ell X$ for some $w \in W$, and by Lemma \ref{lemma:dotactionmodell}, the weights $w \cdot \lambda$ for $w \in W$ are all incongruent modulo $\ell X$.
\end{proof}

\begin{remark}\label{remark:weightspacedecomposition}
Using Remark \ref{remark:dotactionmodell}, the corollary also holds if $\lambda = 0$, if $\ell$ is odd, if $\ell > h$, and if $\ell$ is coprime to $3$ if $\Phi$ is of type $G_2$.
\end{remark}

\subsection{Kostant's theorem for quantum groups}

Let $\lambda \in C_\Z$. The cohomology calculations of Sections \ref{section:UJ1ringstructure} and \ref{section:paraboliccohomology} were critically dependent on Kostant's explicit formula for the $B$-module structure of the ordinary Lie algebra cohomology $\opH^\bullet(\fraku,L(\lambda))$. Equivalently, Kostant's formula computes the $\Dist(B)$-module structure of $\opH^\bullet(\U(\fraku),L(\lambda))$, where $\U(\fraku)$ is the universal enveloping algebra for $\fraku$.

In the root-of-unity quantum setting, the correct analogs for $\Dist(B)$ and $\U(\fraku)$ are $\Uz(\frakb)$ and $\cuzu$, respectively. Just as we needed an explicit description for the $\Dist(B)$-module structure on $\opH^\bullet(\U(\fraku),L(\lambda))$ to compute $\opH^\bullet(U_1,L(\lambda))$, so too will we need an explicit description of the $\Uz(\frakb)$-module structure on $\opH^\bullet(\cuzu,\Lz(\lambda))$ to compute $\opH^\bullet(\uzu,\Lz(\lambda))$. More generally, one can compute the $\Uzlj$-module structure of the cohomology space $\opH^\bullet(\cuzuj,\Lz(\lambda))$. This is essentially done already in \cite[\S 6]{UGA:2010}, though the setup there is not quite the same as what we need here (in \cite{UGA:2010}, the cohomology space $\opH^\bullet(\cuzuj,\Lz(\lambda))$ is computed as a $\cuzlj$-module instead of as a $\Uzlj$-module). In this section we make some brief remarks to explain how the results of \cite[\S 6]{UGA:2010} can be modified to suit our needs.

Let $\lambda \in X^+$. Then the simple integrable $\Uz$-module $\Lz(\lambda)$ is made a $\cuz$-module via the quotient $\cuz \twoheadrightarrow \uz$ and the inclusion $\uz \hookrightarrow \Uz$. By restriction, $\Lz(\lambda)$ is a module for the algebras $\Uzlj \subset \Uz$ and $\cuzuj \subset \cuz$. The right adjoint action of $\Uq$ on itself induces a right action of $\Uzpj$ on $\cuzuj$ \cite[\S 2.7]{Bendel:2011}. Then by \cite[Theorem 4.3.1]{Drupieski:2011}, the cohomology space $\opH^\bullet(\cuzuj,\Lz(\lambda))$ is naturally a graded left $\Uzpj$-module, and $\opH^\bullet(\cuzuj,\C)$ is a graded $\Uzpj$-module algebra. By restriction, $\opH^\bullet(\cuzuj,\Lz(\lambda))$ and $\opH^\bullet(\cuzuj,\C)$ are modules for the Levi subalgebra $\Uzlj \subset \Uzpj$.

The $\Uzlj$-module structure on $\opH^\bullet(\cuzuj,\Lz(\lambda))$ can be computed using the same arguments as in \cite[\S 6]{UGA:2010}, once two important changes are made. First, the algebra $\cuzlj$ should be replaced by the algebra $\Uzlj$. Second, the algebra $K$ defined in \cite[\S 6.2]{UGA:2010} should be replaced by the algebra $\Uzo$. Then the subalgebra $\Uzo \uzlj$ of $\Uz$ generated by $\Uz$ and $\uzlj$ is the correct quantum analog of the group scheme $(L_J)_1 T$. With these modifications, the main result of \cite[\S 6.4]{UGA:2010} may be stated as follows:

\begin{theorem} \label{theorem:quantumkostant} \textup{(cf.\ \cite[Theorem 6.4.1]{UGA:2010})}
Let $\lambda \in \CZbar$. Assume that $\ell$ is odd, that $\ell$ is coprime to $3$ if $\Phi$ has type $G_2$, and that $\ell \geq h-1$. Then for each $n \in \N$, there exists a $\Uzlj$-module isomorphism
\[
\opH^n(\cuzuj,\Lz(\lambda)) \cong \bigoplus_{\substack{w \in {}^JW \\ \ell(w) = n}} L_J^\zeta(w \cdot \lambda).
\]
\end{theorem}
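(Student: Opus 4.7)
My plan is to mirror the proof of \cite[Theorem 6.4.1]{UGA:2010}, which establishes the analogous isomorphism as a $\cuzlj$-module, and to refine the output to a $\Uzlj$-module decomposition by making the two substitutions indicated in the preceding discussion: replace $\cuzlj$ throughout by $\Uzlj$, and replace the auxiliary algebra $K$ from \cite[\S 6.2]{UGA:2010} by the full Lusztig toral subalgebra $\Uzo$. The subalgebra $\Uzo \uzlj$ of $\Uz$ then plays the role of $(L_J)_1 T$ in the classical infinitesimal setting, and this is the algebra through which the desired Levi action naturally factors.

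The steps I would carry out, in order, are: (i) verify that $\Uzo$ normalizes $\uzlj$ inside $\Uz$, so that $\Uzo \uzlj$ is a well-defined subalgebra; (ii) compute the $\uzlj$-module structure of $\opH^\bullet(\cuzuj, \Lz(\lambda))$ by the argument of \cite[\S 6]{UGA:2010}, whose essential input is a quantum analog of Kostant's theorem valid under the stated hypotheses on $\ell$; (iii) decompose the cohomology into $\Uzo$-weight spaces, using that $\Uzo$ is a semisimple commutative algebra, to determine the $\Uzo$-weight of each simple $\uzlj$-summand; (iv) invoke the argument of Lemma \ref{lemma:dotactionmodell} (with $p$ replaced by $\ell$, and using $\ell \geq h-1$ together with the $G_2$ hypothesis) to conclude that the weights $\set{w \cdot \lambda : w \in {}^JW}$ are pairwise distinct modulo $\ell X$; and (v) observe that because $\lambda \in \CZbar$, each $w \cdot \lambda$ with $w \in {}^JW$ is both $J$-dominant and $J$-restricted, so it is the highest weight of a unique simple $\Uzlj$-module $L_J^\zeta(w \cdot \lambda)$, which must then match the summand of the corresponding cohomological degree and $\Uzo$-weight.

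The main obstacle I expect is the line-by-line verification that every step of \cite[\S 6]{UGA:2010} — spectral sequence constructions, normality and flatness of the intermediate subalgebras, and Ext-vanishing between distinct composition factors — survives the substitution of $\Uzo$ for $K$ and $\Uzlj$ for $\cuzlj$. Most of this should be formal since $\Uzo$ contains $K$ and the $\Uzo$-weight grading refines the $K$-grading; the genuinely delicate step is the Ext-vanishing between distinct summands $L_J^\zeta(w \cdot \lambda)$ and $L_J^\zeta(w' \cdot \lambda)$, which must be derived from the linkage principle for $\Uzlj$ in order to guarantee that the $\uzlj$-decomposition lifts uniquely to a $\Uzlj$-decomposition rather than admitting nontrivial self-extensions in the associated filtration.
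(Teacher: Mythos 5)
Your overall strategy coincides with the paper's: the paper gives no self-contained proof of Theorem \ref{theorem:quantumkostant}, but simply asserts that the arguments of \cite[\S 6]{UGA:2010} go through after replacing $\cuzlj$ by $\Uzlj$ and the algebra $K$ by $\Uzo$, which is exactly what you propose. Your identification of the Ext-vanishing between distinct factors $L_J^\zeta(w \cdot \lambda)$, via the linkage principle for the Levi, as the delicate point is also apt.

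However, your step (iv) is a genuine misstep. You propose to show that the weights $w \cdot \lambda$, $w \in {}^JW$, are pairwise distinct modulo $\ell X$ by "the argument of Lemma \ref{lemma:dotactionmodell}." That lemma requires $\lambda \in C_\Z$ (the open alcove) and, crucially, that $\ell$ be coprime to $n+1$ in type $A_n$ and to $3$ in type $E_6$ --- none of which is assumed in Theorem \ref{theorem:quantumkostant}, which allows $\lambda \in \CZbar$ and imposes no coprimality to the index of $\Z\Phi$ in $X$. The conclusion you want genuinely fails on the alcove walls: the remark following Lemma \ref{lemma:dotactionmodp} exhibits $\lambda = 2\omega_1 + \omega_2 \in \CZbar$ in type $A_2$ with $p=5$ and $w_0 \cdot \lambda \equiv e \cdot \lambda \pmod{5\,\Z\Phi}$. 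Fortunately, mod-$\ell$ distinctness is not what is needed here: the whole point of replacing $K$ by $\Uzo$ is that the Lusztig toral algebra $\Uzo$ (unlike $\uzo$) detects honest weights in $X$ rather than classes in $X/\ell X$, and the simple integrable $\Uzlj$-modules $L_J^\zeta(\mu)$ are parametrized by $\mu \in X_J^+$ itself. To separate the summands one therefore only needs $w \cdot \lambda \neq w' \cdot \lambda$ in $X$ for $w \neq w'$, which is Lemma \ref{lemma:dotactionfaithful} and holds with no hypothesis on $\ell$ at all. Mod-$\ell$ considerations enter only later, when one passes from $\cuzuj$ to the small quantum group (Lemma \ref{lemma:admissibleweights} and Corollary \ref{corollary:weightspacedecomposition}), and there the paper does restrict to $\lambda \in C_\Z$ and does add the coprimality hypotheses. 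Replace step (iv) by this observation and your outline matches the paper's intended argument.
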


\subsection{The ring structure of \texorpdfstring{$\opH^\bullet(\cuzu,\C)$}{H(Uzu,C)}}

In this section we provide a semi-explicit description for the ring structure on $\opH^\bullet(\cuzu,\C)$. The description is similar to that for $\opH^\bullet(\fraku,\C)$ given in Remark \ref{remark:ordinaryringstructure}, though significantly more work is required to obtain the ring structure in the quantum setting because of the lack of an explicit projective resolution for $\cuzu$.

Let $\{\gamma_1,\ldots,\gamma_N\}$ be an enumeration of $\Phi^+$ as in \cite[\S 9.3]{De-Concini:1993}, and let $E_{\gamma_1},\ldots,E_{\gamma_N} \in \cuzu$ be the corresponding positive root vectors. Set $\Lambda = \N^{N+1}$, viewed as a totally ordered semigroup via the reverse lexicographic order. Then by \cite[Theorem 9.3 and \S 10.1]{De-Concini:1993}, there exists a multiplicative $\Lambda$-filtration on $\cuzu$ such that the associated graded algebra $\gr^\Lambda \cuzu$ is generated by the symbols $\{E_{\gamma_1},\ldots,E_{\gamma_N}\}$ subject to the relations
\[
E_{\gamma_i}E_{\gamma_j} = \zeta^{(\gamma_i,\gamma_j)} E_{\gamma_j}E_{\gamma_i} \qquad \text{if $1 \leq i < j \leq N$.}
\]
Alternately, by \cite[Remark 10.1]{De-Concini:1993}, there exists a sequence of algebras $U^{(-1)},U^{(0)},U^{(1)},\ldots,U^{(N)}$ such that
\begin{enumerate}
\item $U^{(-1)} = \cuzu$;
\item For $0 \leq i \leq N$, $U^{(i-1)}$ admits a multiplicative $\N$-filtration;
\item For $1 \leq i \leq N$, $U^{(i)}$ is the associated graded algebra of $U^{(i-1)}$; and
\item $U^{(N)} \cong \gr^\Lambda \cuzu$.
\end{enumerate}
It follows that for each $1 \leq i \leq N$, there exists a spectral sequence of algebras satisfying
\begin{equation} \label{eq:Mayspecseq}
E_1^{p,q} = \opH^{p+q}(U^{(i)},\C)_{(p)} \Rightarrow \opH^{p+q}(U^{(i-1)},\C),
\end{equation}
where the subscript on the $E_1$-term denotes the internal grading on $\opH^\bullet(U^{(i)},\C)$ arising from the $\N$-grading on $U^{(i)}$. The right adjoint action of $\Uzo$ on $\cuzu$ passes to an action of $\Uzo$ on each $U^{(i)}$, so \eqref{eq:Mayspecseq} is also a spectral sequence of $\Uzo$-module algebras.

By \cite[Proposition 2.1]{GK:1993}, the cohomology ring $\opH^\bullet(U^{(N)},\C)$ is isomorphic to the graded ring $\Lambda_\zeta^\bullet$ generated by the symbols $\{x_{\gamma_1},\ldots,x_{\gamma_N}\}$ (each of graded degree one), subject to the relations
\begin{align*}
x_{\gamma_i}x_{\gamma_j} + \zeta^{-(\gamma_i,\gamma_j)}x_{\gamma_j}x_{\gamma_i} &= 0 & \text{if $1 \leq i < j \leq N$, and} \\
x_{\gamma_i}^2 &= 0 &\text{for all $1 \leq i \leq N$.}
\end{align*}
The $\Uzo$-module structure on $\Lambda_\zeta^\bullet$ is obtained by assigning $x_{\gamma_i}$ to have weight $-\gamma_i$, so there exists an equality of formal characters $\ch \Lambda_\zeta^\bullet = \ch \Lambda^\bullet(\fraku^*)$. In particular, for each $w \in W$, the $(w \cdot 0)$-weight space of $\Lambda_\zeta^\bullet$ (and hence also of $\opH^\bullet(U^{(i)},\C)$ for each $-1 \leq i \leq N$) is one-dimensional.

Given $w \in W$ with $\Phi(w) = \set{\beta_1,\ldots,\beta_n}$ ordered as in Remark \ref{remark:ordinaryringstructure}, set $f_{\Phi(w)} = x_{\beta_1} \cdots x_{\beta_n} \in \Lambda_\zeta^{\bullet}$. Since $\opH^\bullet(\cuzu,\C)$ is spanned by weight vectors of weights $w \cdot 0$ with $w \in W$ (by Theorem \ref{theorem:quantumkostant}), and since for each $w \in W$ the $(w \cdot 0)$-weight space of $\Lambda_\zeta^\bullet$ is one-dimensional, the (image of the) vector $f_{\Phi(w)}$ must for all $1 \leq i \leq N$ be a permanent cycle in the spectral sequence \eqref{eq:Mayspecseq}, and (the image of) the set $\{f_{\Phi(w)}: w \in W \}$ must form a basis for $\opH^\bullet(\cuzu,\C)$. Finally, the uni-dimensionality of the $(w \cdot 0)$-weight spaces in $\Lambda_\zeta^\bullet$ implies that the cup product between weight vectors in $\opH^\bullet(\cuzu,\C)$ is induced by the multiplication in $\Lambda_\zeta^\bullet$.\footnote{In a spectral sequence of algebras, the product on the $E_\infty$-page will determine the product on the abutment up to terms in lower filtered degree. In our situation, the product also respects weight spaces, and the unidimensionality of certain weight spaces implies that there are no vectors of the correct weight in lower filtered degree.}

\subsection{Weight space structure}

Now we look at the structure of the $\uzo$-weight space
\[
\Hom_{\uzo}(w \cdot \lambda,\opH^\bullet(\uzu,\Lz(\lambda))) \cong \opH^\bullet(\uzb,\Lz(\lambda) \otimes -w \cdot \lambda)
\]
as a left module for $\opH^\bullet(\uzb,\C)$ under the cup product. Note that $\uzu$ is a left coideal subalgebra in $\uzb$ (i.e., the coproduct $\Delta$ on $\uzb$ satisfies $\Delta(\uzu) \subset \uzb \otimes \uzu$), so it makes sense to consider the left cup product action
\[
\cup: \opH^\bullet(\uzb,\C) \otimes \opH^\bullet(\uzu,\Lz(\lambda) \otimes -w \cdot \lambda) \rightarrow \opH^\bullet(\uzu,\Lz(\lambda) \otimes -w \cdot \lambda).
\]

Let $\set{x_{\gamma_1},\ldots,x_{\gamma_N}} \subset \fraku$ and $\set{f_{\gamma_1},\ldots,f_{\gamma_N}} \subset \fraku^*$ be dual bases for $\fraku$ and $\fraku^*$ as in Remark \ref{remark:ordinaryringstructure}. For $0 \leq i \leq N$, let $R_i \subset \fraku^*$ be the span of the set $\{f_{\gamma_1},\ldots,f_{\gamma_i}\}$. Then $S^\bullet(R_N) \cong S^\bullet(\fraku^*)$. Let $\calZ_i$ be the subalgebra of $\cuzb$ generated by the set $\{E_{\gamma_1}^\ell,\ldots,E_{\gamma_i}^\ell\}$. Then $\calZ_i$ is normal in $\cuzb$ because the generators for $\calZ$ are central in $\cuz$. Set $A_i = \cuzb//\calZ_i$. Then $A_0 = \cuzb$ and $A_N = \cuzb//\calZ^+ \cong \uzb$. Let $B_i \subset A_{i-1}$ be the (normal) subalgebra generated by $E_{\gamma_i}^\ell$. Then $A_{i-1}//B_i \cong A_i$. Moreover, it follows from the description of the coproduct in Lemma \ref{lemma:ellthpowerscoproducts} that the algebras $A_i$ and $B_i$ inherit bialgebra structures from $\cuzb$.

\begin{proposition} \label{proposition:weightspaceinduction}
Assume that $\ell$ is odd, that $\ell > h$, and that $\ell$ is coprime to $3$ if $\Phi$ is of type $G_2$. Then for all $0 \leq i \leq N$, $\opH^{\textup{odd}}(A_i,\C) = 0$ and $\opH^{2\bullet}(A_i,\C) \cong S^\bullet(R_i)^{(1)}$ as $\Uzo$-module algebras. Now let $\lambda \in C_\Z$ and $w \in W$, and suppose that also $\ell$ is coprime to $n+1$ if $\Phi$ is of type $A_n$, and that $\ell$ is coprime to $3$ if $\Phi$ is of type $E_6$. Then the space $\opH^\bullet(A_i,\Lz(\lambda) \otimes -w \cdot \lambda)$ is free as a left $\opH^\bullet(A_i,\C)$-module under the cup product, generated by a vector in degree $\ell(w)$ of $\Uzo$-weight zero.
\end{proposition}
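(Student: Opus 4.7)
The plan is to prove (a) and (b) simultaneously by induction on $i$, using a two-row Hochschild--Serre spectral sequence at each step.

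For the base case $i = 0$ with $A_0 = \cuzb$ and $R_0 = 0$: since $\cuz^0 \cong \uzo$ is the group algebra of the finite abelian group $X/\ell X$ and hence semisimple, the LHS spectral sequence for $\cuzu$ normal in $\cuzb$ with quotient $\cuz^0$ degenerates to give $\opH^\bullet(\cuzb, M) \cong \opH^\bullet(\cuzu, M)^{\cuz^0}$ for any $\cuzb$-module $M$. Theorem~\ref{theorem:quantumkostant} with $J = \emptyset$ expresses $\opH^\bullet(\cuzu, M)$ as a sum of one-dimensional weight spaces indexed by $w \in W$, and Lemmas~\ref{lemma:dotactionmodell} and~\ref{lemma:dotactionfaithful} (with Remark~\ref{remark:dotactionmodell} handling $\lambda = 0$) single out the unique surviving summand under these invariants. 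This yields $\opH^\bullet(\cuzb,\C) = \C$ and, for $M = \Lz(\lambda) \otimes -w\cdot\lambda$, a one-dimensional class in degree $\ell(w)$ of $\Uzo$-weight zero, matching the $i = 0$ case of both claims.

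For the inductive step, I would use the LHS spectral sequence for $B_i$ normal in $A_{i-1}$ with quotient $A_i$:
\[
E_2^{p,q} = \opH^p(A_i, \opH^q(B_i, M)) \Rightarrow \opH^{p+q}(A_{i-1}, M).
\]
By Lemma~\ref{lemma:ellthpowerscoproducts}, the central generator $E_{\gamma_i}^\ell$ of $B_i \cong \C[E_{\gamma_i}^\ell]$ is primitive, so $\opH^\bullet(B_i, \C) = \Lambda[y_i]$ with $\lvert y_i\rvert = 1$. Because $E_{\gamma_i}^\ell$ acts by zero on both $\C$ and $\Lz(\lambda) \otimes -w\cdot\lambda$, we have $\opH^q(B_i, M) = M$ for $q \in \{0, 1\}$ and zero otherwise, and the generator $y_i$ carries $\Uzo$-weight $-\ell\gamma_i$. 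Thus $E_2$ is concentrated on two rows and the only possibly nonzero differential is $d_2(m \otimes y_i) = \xi_i \cup m$, where $\xi_i := d_2(y_i) \in \opH^2(A_i, \C)$ is the transgression class, of weight $-\ell\gamma_i$.

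For part (a), set $R = \opH^\bullet(A_i, \C)$. The abutment $S^\bullet(R_{i-1})^{(1)}$ is concentrated in even degrees, forcing $E_\infty^{n,0} = R^n/\xi_i R^{n-2} = 0$ for $n$ odd and $E_\infty^{n,1} = \operatorname{Ann}_R(\xi_i)^n \cdot y_i = 0$ for $n$ even. An induction starting from $R^1 = 0$ gives $R^{\mathrm{odd}} = 0$, and then $\operatorname{Ann}_R(\xi_i) = 0$, so $\xi_i$ is a non-zero-divisor in $R$. Dimension comparison in even degrees produces $R/\xi_i R \cong S^\bullet(R_{i-1})^{(1)}$, and lifting its polynomial generators to $R$ (using graded-commutativity of $R$, which holds since $A_i$ is a bialgebra) together with the non-zero-divisor property of $\xi_i$ assembles the isomorphism $R \cong S^\bullet(R_{i-1})^{(1)} \otimes \C[\xi_i] \cong S^\bullet(R_i)^{(1)}$ as graded $\Uzo$-module algebras. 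For part (b), let $N = \opH^\bullet(A_i, \Lz(\lambda) \otimes -w\cdot\lambda)$; the inductive rank-one freeness of the abutment makes it concentrated in degrees of parity $\ell(w)$, and the identical two-row analysis gives the parity of $N$, that $\xi_i$ is a non-zero-divisor on $N$, and that $N/\xi_i N \cong (R/\xi_i R) \cdot \bar\tau_w$. Lifting $\bar\tau_w$ to $\tau_w^{A_i} \in N^{\ell(w)}$ of weight zero and using Nakayama-style surjectivity together with induction on cohomological degree (to handle the kernel via the non-zero-divisor property) delivers $N = R \cdot \tau_w^{A_i}$ freely, as required. The main obstacle is the identification of $d_2$ with cup product by the transgressed class $\xi_i$ coupled with the extraction of the precise Koszul-style polynomial-extension structure from the single-parity concentration of the abutment; this is what upgrades an abstract dimensional match into the explicit algebra isomorphism $R \cong S^\bullet(R_i)^{(1)}$ and the rank-one freeness of $N$ over $R$.
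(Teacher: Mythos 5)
Your proposal is correct and follows essentially the same route as the paper: induction on $i$, with the base case handled by the quantum Kostant theorem together with the weight-separation lemmas, and the inductive step handled by the two-row LHS spectral sequence for the central subalgebra $B_i$, identifying $d_2$ with multiplication by the transgressed class $\xi_i = z_i$ and then extracting the polynomial extension $\opH^\bullet(A_i,\C)\cong S^\bullet(R_{i-1})^{(1)}[\xi_i]$ and the rank-one freeness of $\opH^\bullet(A_i,V)$. Your packaging of the last step via the non-zero-divisor property and a dimension count is just a reorganization of the paper's short exact sequence and commutative-diagram argument (the part the paper delegates to \cite[\S 5.3]{Bendel:2011}), so no substantive difference.
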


\begin{proof}
The proof is by induction on $i$. For $i=0$, we have, as in the proof of Lemma \ref{lemma:admissibleweights},
\[
\opH^\bullet(\cuzb,\Lz(\lambda) \otimes -w \cdot \lambda) \cong ( \opH^\bullet(\cuzu,\Lz(\lambda)) \otimes -w \cdot \lambda )^{\uzo}.
\]
By Theorem \ref{theorem:quantumkostant}, $\opH^\bullet(\cuzu,\Lz(\lambda))$ decomposes as a direct sum of one-dimensional $\Uzo$-modules
\[
\opH^\bullet(\cuzu,\Lz(\lambda)) \cong \bigoplus_{w' \in W} \C_{w' \cdot \lambda},
\]
with the summand $\C_{w' \cdot \lambda}$ appearing in degree $\ell(w')$. Since the $w' \cdot \lambda$ are all distinct as weights for $\uzo$ by Lemma \ref{lemma:dotactionmodell} (cf.\ also Remark \ref{remark:dotactionmodell}), we get
\[
\opH^\bullet(\cuzb,\Lz(\lambda) \otimes -w \cdot \lambda) = \opH^{\ell(w)}(\cuzb,\Lz(\lambda) \otimes -w \cdot \lambda) \cong \C.
\]
In particular, if $\lambda = 0$ and $w=1$, then $\Lz(\lambda) \otimes -w \cdot \lambda = \C$, and we get $\opH^\bullet(\cuzb,\C) \cong \C$. This establishes the induction hypothesis.

Now let $0 < i \leq N$, and assume that the proposition is true for $i-1$. Set $V = \Lz(\lambda) \otimes -w \cdot \lambda$. Since $A_{i-1}$ is free over its central subalgebra $B_i$, we can consider the pair of LHS spectral sequences
\begin{align}
E_2^{a,b}(\C) &= \opH^a(A_{i-1}//B_i,\opH^b(B_i,\C)) \Rightarrow \opH^{a+b}(A_{i-1},\C), \quad \text{and} \label{eq:Cinductionspecseq} \\
E_2^{a,b}(V) &= \opH^a(A_{i-1}//B_i,\opH^b(B_i,V)) \Rightarrow \opH^{a+b}(A_{i-1},V). \label{eq:Vinductionspecseq}
\end{align}
Because $B_i \subset A_{i-1}$ are bialgebras, \eqref{eq:Cinductionspecseq} is a spectral sequence of algebras, and \eqref{eq:Vinductionspecseq} is a module over \eqref{eq:Cinductionspecseq}. The algebra $B_i$ acts trivially on $V$, so $\opH^\bullet(B_i,V) \cong V \otimes \opH^\bullet(B_i,\C)$ as an $A_{i-1}//B_i$-module. Moreoever, since $B_i$ is central in $A_{i-1}$, the action of $A_{i-1}//B_i \cong A_i$ on $\opH^\bullet(B_i,\C)$ is trivial by \cite[Lemma 5.2.2]{GK:1993}. Then the spectral sequences can be rewritten as
\begin{align}
E_2^{a,b}(\C) &= \opH^b(B_i,\C) \otimes \opH^a(A_i,\C) \Rightarrow \opH^{a+b}(A_{i-1},\C), \quad \text{and} \label{eq:rewriteCinductionspecseq} \\
E_2^{a,b}(V) &= \opH^b(B_i,\C) \otimes \opH^a(A_i,V) \Rightarrow \opH^{a+b}(A_{i-1},V). \label{eq:rewriteVinductionspecseq}
\end{align}
The identification between the $E_2$-pages of \eqref{eq:Cinductionspecseq} and \eqref{eq:rewriteCinductionspecseq} is an isomorphism of graded-com\-mutative algebras.

From now on, write $E_2^{a,b}$ to denote generically a term in the $E_2$-page of either \eqref{eq:rewriteCinductionspecseq} or \eqref{eq:rewriteVinductionspecseq}. Since $B_i$ is a polynomial algebra on a generator of weight $\ell\gamma_i$ for $\Uzo$, $\opH^\bullet(B_i,\C)$ is an exterior algebra on the one-dimensional vector space $\C_i$ of weight $-\ell \gamma_i$, i.e., $\opH^\bullet(B_i,\C) \cong \Lambda^\bullet(\C_i)$. Then $E_2^{a,1} \cong \C_i \otimes E_2^{a,0}$, $E_2^{a,b} = 0$ for all $b \geq 2$, and the only possible non-zero differentials have the form $d_2: E_2^{a,1} \rightarrow E_2^{a+2,0}$. Furthermore, arguing exactly as in \cite[\S 5.3]{Bendel:2011}, one can use the induction hypothesis to show that $E_2^{a,0} = 0$ for $a < \ell(w)$, that $E_2^{\ell(w),0} \cong \C$, that $E_2^{\ell(w)+a,0} = 0$ for all odd $a$, and that $d_2: E_2^{\ell(w)+a,1} \rightarrow E_2^{\ell(w)+a+2,0}$ is injective for all even $a$ (taking $w=1$ for \eqref{eq:rewriteCinductionspecseq}). These observations imply that $E_\infty^{a,b} = 0$ if $b \geq 1$, and hence that the edge maps $\opH^\bullet(A_i,\C) \rightarrow \opH^\bullet(A_{i-1},\C)$ and $\opH^\bullet(A_i,V) \rightarrow \opH^\bullet(A_{i-1},V)$ of \eqref{eq:rewriteCinductionspecseq} and \eqref{eq:rewriteVinductionspecseq} are surjective.

Fix $0 \neq v \in \C_i$, and set $z_i = d_2(v) \in \opH^2(A_i,\C)$. The vector $z_i$ is central in $\opH^\bullet(A_i,\C)$, because the cohomology ring of a bialgebra is always graded-commutative. Now fix elements $z_1,\ldots,z_{i-1} \in \opH^2(A_i,\C)$ of weights $-\ell\gamma_1,\ldots,-\ell\gamma_{i-1}$ lifting the polynomial generators for $\opH^\bullet(A_{i-1},\C)$. For $r \geq 0$, let $S_i^r \subseteq \opH^{2r}(A_i,\C)$ be the subspace spanned by all homogenous degree-$r$ monomials in the elements $z_1,\ldots,z_i$. Then $S_i^r$ is a quotient of the space $S^r(R_i)$ defined prior to the statement of the theorem, with the quotient map $S^r(R_i) \rightarrow S_i^r$ defined by $f_j \mapsto z_j$. Moreover, there exists a natural multiplication map $S^r(R_i) \otimes \opH^{\ell(w)}(A_i,V) \rightarrow \opH^{\ell(w)+2r}(A_i,V)$ induced by the composition of the quotient map $S^r(R_i) \rightarrow S_i^r$ with the natural action of $S_i^r \subseteq \opH^{2r}(A_i,\C)$ on $\opH^{\ell(w)}(A_i,V)$.

We claim that the multiplication map $S^r(R_i) \otimes \opH^{\ell(w)}(A_i,V) \rightarrow \opH^{\ell(w)+2r}(A_i,V)$ is a bijection. Indeed, taking $\lambda = 0$ and $w=1$, then the claim is equivalent to showing for all $r \geq 0$ that $S^r(R_i) \cong \opH^{2r}(A_i,\C)$, and hence that $\Hbul(A_i,\C)$ is a polynomial algebra generated by the elements $\set{z_1,\ldots,z_i}$. Then taking arbitrary $\lambda \in C_\Z$ and $w \in W$, the claim implies that $\opH^\bullet(A_i,V)$ is generated freely as a $\opH^\bullet(A_i,\C)$-module over the one-dimensional space $\opH^{\ell(w)}(A_i,V)$.

We proceed to prove the claim. For $r=0$ the claim is true by the observation $E_2^{\ell(w),0} \cong \C$, so let $r \geq 1$ and assume that the claim is true for $r-1$. Then there exists a short exact sequence
\[
0 \longrightarrow E_2^{\ell(w)+2(r-1),1} \stackrel{d_2}{\longrightarrow} E_2^{\ell(w)+2r,0} \longrightarrow E_\infty^{\ell(w)+2r,0} \longrightarrow 0,
\]
which may be rewritten as
\begin{equation} \label{eq:SES}
0 \longrightarrow \C_i \otimes \opH^{\ell(w)+2(r-1)}(A_i,V) \stackrel{d_2}{\longrightarrow} \opH^{\ell(w)+2r}(A_i,V) \longrightarrow \opH^{\ell(w)+2r}(A_{i-1},V) \longrightarrow 0.
\end{equation}
By induction on $i$ and $r$, the natural multiplication maps induce isomorphisms
\begin{gather*}
S^{r-1}(R_i) \otimes \opH^{\ell(w)}(A_i,V) \cong \opH^{\ell(w)+2(r-1)}(A_i,V) \quad \text{and} \\
S^r(R_{i-1}) \otimes \opH^{\ell(w)}(A_{i-1},V) \cong \opH^{\ell(w)+2r}(A_{i-1},V).
\end{gather*}
Moreover, since \eqref{eq:rewriteVinductionspecseq} is a module over \eqref{eq:rewriteCinductionspecseq}, there exists a commutative diagram
\[
\xymatrix{
S^r(R_{i-1}) \otimes \opH^{\ell(w)}(A_i,V) \ar@{->}[r] \ar@{->}[d]^{\sim} & \opH^{\ell(w)+2r}(A_i,V) \ar@{->}[d] \\
S^r(R_{i-1}) \otimes \opH^{\ell(w)}(A_{i-1},V) \ar@{->}[r]^(.57){\sim} & \opH^{\ell(w)+2r}(A_{i-1},V)
}
\]
where the vertical maps are induced by the edge maps of \eqref{eq:rewriteVinductionspecseq}, and the horizontal maps are the natural multiplication maps. The left-hand vertical map in the diagram is an isomorphism because the edge map $E_2^{\ell(w),0} \rightarrow E_\infty^{\ell(w),0}$ is surjective and both spaces are one-dimensional. Also using the fact that \eqref{eq:rewriteCinductionspecseq} is a spectral sequence of algebras and that \eqref{eq:rewriteVinductionspecseq} is a module over \eqref{eq:rewriteCinductionspecseq}, we see that the differential $d_2: \C_i \otimes E_2^{a,0} \rightarrow E_2^{a+2,0}$ is just multplication on $E_2^{a+2,0}$ by $f_i$. Combining the above observations, we conclude that the multiplication map $S^r(R_i) \otimes \opH^{\ell(w)}(A_i,V) \rightarrow \opH^{\ell(w)+2r}(A_i,V)$ is surjective, and also injective by dimension count, and hence an isomorphism.
\end{proof}

\begin{corollary} \label{corollary:freeweightspace}
Assume that $\ell$ is odd, that $\ell > h$, and that $\ell$ is coprime to $3$ if $\Phi$ has type $G_2$. Then $\opH^{\textup{odd}}(\uzb,\C) = 0$ and $\opH^{2\bullet}(\uzb,\C) \cong S^\bullet(\fraku^*)^{(1)}$ as $\Uzo$-module algebras. Now let $\lambda \in C_\Z$ and $w \in W$, and suppose that also $\ell$ is coprime to $n+1$ if $\Phi$ is of type $A_n$, and that $\ell$ is coprime to $3$ if $\Phi$ is of type $E_6$. Then $\opH^\bullet(\uzb,\Lz(\lambda) \otimes -w \cdot \lambda)$ is free as a left $\opH^\bullet(\uzb,\C)$-module under the cup product, generated by a vector in degree $\ell(w)$ of weight zero for $\Uzo$.
\end{corollary}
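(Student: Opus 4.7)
The plan is to obtain the corollary as the terminal ($i = N$) case of Proposition \ref{proposition:weightspaceinduction}. First I would note the identifications: by construction $A_N = \cuzb // \calZ_N = \cuzb // \calZ^+$, which the excerpt has already identified with $\uzb$, and $R_N$ is the span of the full dual basis $\{f_{\gamma_1},\ldots,f_{\gamma_N}\}$ of $\fraku^*$, so $R_N = \fraku^*$. With these identifications, substituting $i = N$ into the two conclusions of the proposition yields the two statements of the corollary; the only remaining task is to check that the hypotheses match up in each case.

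For the first assertion I would apply the proposition at $\lambda = 0$ and $w = 1$, so that $V = \C$. The induction in the proof of the proposition invokes Lemma \ref{lemma:dotactionmodell}, but Remark \ref{remark:dotactionmodell} shows that for $\lambda = 0$ its conclusion holds under the weaker hypotheses listed in the first sentence of the corollary ($\ell$ odd, $\ell > h$, and coprime to $3$ in type $G_2$). This yields $\opH^{\textup{odd}}(\uzb,\C) = 0$ together with the polynomial algebra identification $\opH^{2\bullet}(\uzb,\C) \cong S^\bullet(\fraku^*)^{(1)}$ as $\Uzo$-module algebras. For the second assertion, with arbitrary $\lambda \in C_\Z$ and $w \in W$, Lemma \ref{lemma:dotactionmodell} is genuinely used for nonzero $\lambda$ and requires the extra coprimality conditions in types $A_n$ and $E_6$ that appear in the second sentence of the corollary; these match exactly, so the proposition applies and gives that $\opH^\bullet(\uzb,\Lz(\lambda)\otimes -w\cdot\lambda)$ is free over $\opH^\bullet(\uzb,\C)$ on a single generator of degree $\ell(w)$ and trivial $\Uzo$-weight.

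Because the corollary is just the terminal specialization of Proposition \ref{proposition:weightspaceinduction}, there is no substantive obstacle in its own proof; all of the genuine technical work---the inductive construction of the polynomial generators $z_1,\ldots,z_N$, the collapse of each Lyndon--Hochschild--Serre spectral sequence at the $E_2$-page after a single nonzero transgression-style differential, and the identification of the cup-product action with the multiplication map $S^r(R_i)\otimes \opH^{\ell(w)}(A_i,V)\to \opH^{\ell(w)+2r}(A_i,V)$---has already been carried out inside the proposition.
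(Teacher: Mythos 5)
Your proposal is correct and is exactly the paper's argument: the corollary is obtained as the case $i=N$ of Proposition \ref{proposition:weightspaceinduction}, using the identifications $A_N = \cuzb//\calZ^+ \cong \uzb$ and $R_N = \fraku^*$. The paper's own proof is the one-line statement of this specialization, so your additional verification that the hypotheses line up in each half is a harmless elaboration of the same route.
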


\begin{proof}
This is the case $i=N$ of the proposition.
\end{proof}

\subsection{Cohomology for the nilpotent small quantum group}

Now we are ready to compute the structure of the cohomology space $\opH^\bullet(\uzu,\Lz(\lambda))$.

\begin{theorem} \label{theorem:uzucohomology}
Assume that $\ell$ is odd, that $\ell > h$, that $\ell$ is coprime to $n+1$ if $\Phi$ has type $A_n$, and that $\ell$ is coprime to $3$ if $\Phi$ has type $E_6$ or $G_2$. Let $\lambda \in C_\Z$ and $w \in W$. Then there exists an isomorphism of left $\Uzo$-modules and of left $\opH^\bullet(\uzb,\C)$-modules
\begin{equation} \label{eq:uzuiso}
\opH^\bullet(\uzu,\Lz(\lambda)) \cong \opH^\bullet(\uzb,\C) \otimes \opH^\bullet(\cuzu,\Lz(\lambda)),
\end{equation}
with $\opH^\bullet(\uzb,\C)$ acting via the cup product on $\opH^\bullet(\uzu,\Lz(\lambda))$, and via left multiplication on $\opH^\bullet(\uzb,\C) \otimes \opH^\bullet(\cuzu,\Lz(\lambda))$.
\end{theorem}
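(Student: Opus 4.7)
The plan is to assemble pieces already in hand: the $\uzo$-isotypic decomposition of the left-hand side from Corollary \ref{corollary:weightspacedecomposition}, the rank-one-freeness of each isotypic component over $\opH^\bullet(\uzb,\C)$ from Corollary \ref{corollary:freeweightspace}, and the explicit basis of $\opH^\bullet(\cuzu,\Lz(\lambda))$ indexed by $W$ from Theorem \ref{theorem:quantumkostant} (case $J = \emptyset$). The hypotheses of all three results are covered by the assumptions on $\ell$ in the present theorem.

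First I would invoke Corollary \ref{corollary:weightspacedecomposition} to write
\[
\opH^\bullet(\uzu,\Lz(\lambda)) \;\cong\; \bigoplus_{w \in W} \Hom_{\uzo}(w \cdot \lambda, \opH^\bullet(\uzu,\Lz(\lambda))),
\]
and identify the $w$-th summand with $\opH^\bullet(\uzb,\Lz(\lambda) \otimes -w \cdot \lambda)$ as in the proof of Lemma \ref{lemma:admissibleweights}. Corollary \ref{corollary:freeweightspace} then produces a generator $\tilde\xi_w$ in degree $\ell(w)$ of $\Uzo$-weight zero making this summand free of rank one over $\opH^\bullet(\uzb,\C)$ under the cup product. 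Reversing the tensor shift by $-w \cdot \lambda$, I obtain a vector $\xi_w \in \opH^{\ell(w)}(\uzu,\Lz(\lambda))$ of $\Uzo$-weight $w \cdot \lambda$ that generates the $(w \cdot \lambda)$-isotypic component freely over $\opH^\bullet(\uzb,\C)$.

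Next I would fix, by Theorem \ref{theorem:quantumkostant}, a basis $\{v_w : w \in W\}$ of $\opH^\bullet(\cuzu,\Lz(\lambda))$ with $v_w$ of degree $\ell(w)$ and $\Uzo$-weight $w \cdot \lambda$, and define
\[
\Phi\colon \opH^\bullet(\uzb,\C) \otimes \opH^\bullet(\cuzu,\Lz(\lambda)) \longrightarrow \opH^\bullet(\uzu,\Lz(\lambda)), \qquad x \otimes v_w \longmapsto x \cup \xi_w.
\]
Linearity and associativity of the cup product make $\Phi$ a map of left $\opH^\bullet(\uzb,\C)$-modules. Since $\opH^\bullet(\uzb,\C) \cong S^\bullet(\fraku^*)^{(1)}$ has all $\Uzo$-weights in $\ell X$, cupping with classes from $\opH^\bullet(\uzb,\C)$ preserves $\uzo$-isotypes, so $\Phi$ respects the isotypic decompositions on both sides and on the $(w \cdot \lambda)$-part restricts to the isomorphism $\opH^\bullet(\uzb,\C) \otimes \C v_w \;\cong\; \opH^\bullet(\uzb,\C) \cdot \xi_w$ supplied by Corollary \ref{corollary:freeweightspace}; the $\Uzo$-equivariance then follows by matching weights summand by summand. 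I expect the proof to be short because the real work has already been done in Proposition \ref{proposition:weightspaceinduction}; the only non-routine point is checking that the choices of $\xi_w$ line up coherently with the basis $\{v_w\}$, which is handled by the degree-and-weight bookkeeping above.
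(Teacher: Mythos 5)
Your proposal is correct and takes essentially the same route as the paper, whose entire proof is the one-line statement that the theorem follows from Corollaries \ref{corollary:weightspacedecomposition} and \ref{corollary:freeweightspace}; you have simply spelled out how those two results (together with Theorem \ref{theorem:quantumkostant} to index the generators) assemble into the isomorphism, including the correct observation that cup product with $\opH^\bullet(\uzb,\C)$ preserves $\uzo$-isotypic components because its $\Uzo$-weights lie in $\ell X$.
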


\begin{proof}
The theorem follows by applying Corollaries \ref{corollary:weightspacedecomposition} and \ref{corollary:freeweightspace}.
\end{proof}

The right adjoint action of $\Uzb$ on itself stabilizes the subspace $\uzu$ \cite[\S 2.7]{Bendel:2011}. By \cite[Theorem 4.3.1]{Drupieski:2011}, this induces a left action of $\Uzb$ on the cohomology space $\opH^\bullet(\uzu,\Lz(\lambda))$.

\begin{theorem} \label{theorem:Uzbiso}
The isomorphism \eqref{eq:uzuiso} is an isomorphism of left $\Uzb$-modules.
\end{theorem}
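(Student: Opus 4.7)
The plan is to view the iso \eqref{eq:uzuiso} as a composition of natural $\Uzb$-equivariant maps. Each of the three cohomology spaces in \eqref{eq:uzuiso} inherits a natural left $\Uzb$-action via the right adjoint action of $\Uzb$ on the adjoint-stable subalgebras $\uzb$, $\uzu$, and $\cuzu$ of $\cuzb$, together with \cite[Theorem 4.3.1]{Drupieski:2011}.

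The key compatibility is that the cup product
\[
\opH^\bullet(\uzb,\C) \otimes \opH^\bullet(\uzu,\Lz(\lambda)) \to \opH^\bullet(\uzu,\Lz(\lambda)),
\]
which is well-defined because $\uzu$ is a left coideal subalgebra of $\uzb$, is $\Uzb$-equivariant for the diagonal action on the source. This is the standard compatibility of cup products with adjoint Hopf-algebra actions: the restriction map $\opH^\bullet(\uzb,\C) \to \opH^\bullet(\uzu,\C)$ is $\Uzb$-equivariant by naturality, and the internal cup product on $\opH^\bullet(\uzu,-)$ is $\Uzb$-equivariant by a chain-level argument using the coproduct of $\cuzb$.

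To complete the proof, I would identify $\opH^\bullet(\cuzu,\Lz(\lambda))$ with the generating subspace $\bigoplus_{w\in W} \C\,\xi_w \subset \opH^\bullet(\uzu,\Lz(\lambda))$ from Corollary \ref{corollary:freeweightspace} in a $\Uzb$-equivariant manner. Rather than use the inductive tower $A_0 \to \cdots \to A_N$ of Proposition \ref{proposition:weightspaceinduction} (whose intermediate terms are not adjoint-stable, so individual steps cannot be made $\Uzb$-equivariant), I would pass to the single LHS spectral sequence
\[
E_2^{a,b} = \opH^a(\uzu,\opH^b(\calZ^+,\Lz(\lambda))) \Rightarrow \opH^{a+b}(\cuzu,\Lz(\lambda))
\]
associated to the normal subalgebra $\calZ^+ \subset \cuzu$. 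Since $\calZ^+$ is stable under the adjoint action of $\Uzb$ (being central in $\cuzb$), this is a spectral sequence of $\Uzb$-modules, and in particular its inflation edge map $\opH^\bullet(\uzu,\Lz(\lambda)) \to \opH^\bullet(\cuzu,\Lz(\lambda))$ is $\Uzb$-equivariant. A weight-and-degree comparison via Theorem \ref{theorem:quantumkostant} then shows that this edge map restricts to an isomorphism on $\bigoplus_w \C\,\xi_w$, delivering the required $\Uzb$-equivariant identification.

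The main obstacle is the verification that inflation restricts to a bijection onto $\opH^\bullet(\cuzu,\Lz(\lambda))$ on the generating subspace. This is essentially a bookkeeping argument: by Theorem \ref{theorem:quantumkostant} the $(w\cdot\lambda)$-weight space of $\opH^{\ell(w)}(\cuzu,\Lz(\lambda))$ is one-dimensional, while by Corollary \ref{corollary:freeweightspace} so is the corresponding subspace of $\opH^{\ell(w)}(\uzu,\Lz(\lambda))$; no nontrivial differentials can connect these terms in the spectral sequence, again by the weight separation of Lemma \ref{lemma:dotactionmodell}, so the inflation is an iso there. Composing with the cup product then exhibits \eqref{eq:uzuiso} as a composition of $\Uzb$-equivariant maps, proving the theorem.
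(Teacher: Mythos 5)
Your proposal is correct in substance and reaches the theorem by a genuinely different route for the key step. Both arguments rest on the same two pillars: (i) the cup-product action of $\opH^\bullet(\uzb,\C)$ is $\Uzb$-equivariant (you argue this at the level of $\uzu$-cohomology via the coideal structure and a chain-level computation; the paper argues it after first passing to the $\uzo$-weight spaces $\Hom_{\uzo}(w\cdot\lambda,\opH^\bullet(\uzu,\Lz(\lambda)))\cong\opH^\bullet(\uzb,\Lz(\lambda)\otimes -w\cdot\lambda)$, citing the explicit cocycle formulas of Drupieski and then using graded-commutativity to swap the order of the cup product); and (ii) an equivariant identification of the generating subspace. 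It is in (ii) that you diverge: the paper observes that $\opH^{\ell(w)}(\uzb,\Lz(\lambda)\otimes -w\cdot\lambda)$ is a one-dimensional $\Uzb$-module of $\Uzo$-weight zero, hence trivial, and deduces that each weight space is $\opH^{\bullet-\ell(w)}(\uzb,\C)$ as a $\Uzb$-module; you instead realize $\bigoplus_w\C\,\xi_w$ as a $\Uzb$-equivariant copy of $\opH^\bullet(\cuzu,\Lz(\lambda))$ via the inflation edge map of the Lyndon--Hochschild--Serre spectral sequence for $\calZ^+\subset\cuzu$. Your route is arguably more canonical (it pins down the identification of the generators rather than choosing one), and your bookkeeping is sound: since $\Hom_{\uzo}(w\cdot\lambda,\opH^n(\uzu,\Lz(\lambda)))=0$ for $n<\ell(w)$ by Corollary \ref{corollary:freeweightspace}, no differential can reach the $(w\cdot\lambda)$-weight line of $E_2^{\ell(w),0}$, so inflation is injective there, and both weight lines are one-dimensional by Theorem \ref{theorem:quantumkostant} and Lemma \ref{lemma:dotactionmodell}. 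The paper's route avoids the spectral sequence entirely but needs the triviality observation and the order-swap of the cup product.

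One point in your argument needs more justification than you give it. To make your spectral sequence one of $\Uzb$-modules, you need $\calZ^+$ to be stable under the right adjoint action of all of $\Uzb$, including the divided powers $E_\gamma^{(n\ell)}$. Centrality of $\calZ^+$ in $\cuzb$ only yields stability under the adjoint action of elements of $\cuzb$, which does not contain those divided powers. The stability is true (it is part of the standard compatibility of the $\ell$-center with the Lusztig form, cf.\ \cite{AJS:1994} and \cite[\S 2.7]{Bendel:2011}), but it is not a formal consequence of centrality and should be cited or proved rather than asserted.
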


\begin{proof}
The subalgebra $\uzu \subset \Uzb$ acts trivially on $\opH^\bullet(\uzu,\Lz(\lambda))$, while the root vectors $\{E_\gamma^{(n\ell)}: \gamma \in \Phi^+, n \geq 1 \}$ all commute with the subalgebra $\uzo \subset \Uzb$. It follows then that the action of $\Uzb$ on $\opH^\bullet(\uzu,\Lz(\lambda))$ leaves stable the $\uzo$-weight spaces, so it suffices to show for each $w \in W$ that the $\Uzo$-module isomorphism
\[
\Hom_{\uzo}(w \cdot \lambda,\opH^\bullet(\uzu,\Lz(\lambda))) \cong \opH^\bullet(\uzb,\Lz(\lambda) \otimes -w \cdot \lambda) \cong \opH^{\bullet-\ell(w)}(\uzb,\C)
\]
is an isomorphism of $\Uzb$-modules.

Set $V = \Lz(\lambda) \otimes -w \cdot \lambda$. The theorem now follows from two observations. First, $\opH^{\ell(w)}(\uzb,V)$ is by Corollary \ref{corollary:freeweightspace} a one-dimensional $\Uzb$-module of $\Uzo$-weight zero, that is, is isomorphic to the trivial module for $\Uzb$. Second, the cup product
\begin{equation} \label{eq:backwardscupproduct}
\cup: \opH^{\ell(w)}(\uzb,V) \otimes \opH^{\bullet}(\uzb,\C) \rightarrow \opH^{\bullet+\ell(w)}(\uzb,V)
\end{equation}
is a homomorphism of $\Uzb$-modules. To see this, use the definition for the adjoint actions of $\Uzb$ on $\opH^\bullet(\uzb,\C)$ and $\opH^\bullet(\uzb,V)$ given in \cite[(4.3.1)]{Drupieski:2011}, together with the explicit description for the cup product at the level of cocycles given in \cite[(5.3)]{Drupieski:2011a}. But the cup product in \eqref{eq:backwardscupproduct} is equivalent to the cup product
\[
\cup: \opH^\bullet(\uzb,\C) \otimes \opH^{\ell(w)}(\uzb,V) \rightarrow \opH^{\bullet+\ell(w)}(\uzb,V)
\]
by \cite[VIII.4]{Mac-Lane:1995}, which is an isomorphism by Corollary \ref{corollary:freeweightspace}.
\end{proof}

\subsection{Ring structure}

Though $\uzu$ is not a Hopf algebra, $\opH^\bullet(\uzu,\C) = \Ext_{\uzu}^\bullet(\C,\C)$ is still a ring under the Yoneda composition of extensions.

\begin{theorem}
Assume that $\ell$ is odd, that $\ell > 2(h-1)$, that $\ell$ is coprime to $n+1$ if $\Phi$ is of type $A_n$, and that $\ell$ is coprime to $3$ if $\Phi$ is of type $E_6$ or $G_2$. Then there exists a graded ring isomorphism
\[
\opH^\bullet(\uzu,\C) \cong \opH^\bullet(\uzb,\C) \otimes \opH^\bullet(\cuzu,\C).
\]
\end{theorem}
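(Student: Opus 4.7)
The plan is to mirror the approach of Theorem \ref{theorem:U1cohomologyringstructure}: exhibit two graded subrings $\calA, \calB \subseteq \opH^\bullet(\uzu, \C)$ isomorphic respectively to $\opH^\bullet(\uzb, \C)$ and $\opH^\bullet(\cuzu, \C)$, and then verify that the multiplication map $\theta : \calA \otimes \calB \to \opH^\bullet(\uzu, \C)$ is an isomorphism of graded rings. The underlying vector-space isomorphism is already furnished by Theorem \ref{theorem:uzucohomology}, so the content of the argument is to promote it to an isomorphism of rings.

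For $\calA$ I would take the $\uzo$-invariant subspace $\opH^\bullet(\uzu, \C)^{\uzo}$. Since $\uzu$ is a normal subalgebra of the Hopf algebra $\uzb$ with semisimple quotient $\uzo$, the Lyndon--Hochschild--Serre spectral sequence collapses and yields a ring isomorphism $\opH^\bullet(\uzb, \C) \cong \opH^\bullet(\uzu, \C)^{\uzo} = \calA$, which by Corollary \ref{corollary:freeweightspace} is a graded subring concentrated in even degrees and isomorphic to $S^\bullet(\fraku^*)^{(1)}$. For $\calB$ I would take the direct sum over $w \in W$ of the $X$-weight spaces of weight $w \cdot 0$ in $\opH^\bullet(\uzu, \C)$, so by Theorem \ref{theorem:uzucohomology} each such weight space is one-dimensional and $\dim_\C \calB = |W|$. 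That $\calB$ is closed under the Yoneda product follows from the weight argument in the proof of Theorem \ref{theorem:U1cohomologyringstructure}: if a product $z_1 z_2$ of weight vectors in $\calB$ had a nonzero component in a weight space $w_3 \cdot 0 + \ell \sigma$ with $\sigma \neq 0$, then $w_1 \cdot 0 + w_2 \cdot 0 = w_3 \cdot 0 + \ell \sigma$ with $\sigma \in \Z\Phi$, contradicting Lemma \ref{lemma:sumofdotactionmodp}, whose combinatorial proof is insensitive to whether the integer is $p$ or $\ell$ and applies under the hypothesis $\ell > 2(h-1)$.

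To identify $\calB$ with $\opH^\bullet(\cuzu, \C)$ as rings, I would use the inflation map $\opH^\bullet(\uzu, \C) \to \opH^\bullet(\cuzu, \C)$ induced by the quotient $\cuzu \twoheadrightarrow \uzu$. This is a weight-preserving ring homomorphism, and analyzing the LHS spectral sequence for the central subalgebra $\calZ^+ \subset \cuzu$, together with Lemma \ref{lemma:dotactionmodell} to see that no nonzero differential can land in or originate from weight spaces of weight $w \cdot 0$ for $w \in W$, the inflation restricts to a weight-preserving linear isomorphism $\calB \to \opH^\bullet(\cuzu, \C)$, which is then automatically a ring isomorphism with the explicit description of $\opH^\bullet(\cuzu, \C)$ recalled in Section \ref{section:quantumanalogs}.

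The main obstacle will be to show that $\calA$ commutes elementwise with $\calB$ in the Yoneda ring $\opH^\bullet(\uzu, \C)$, which is what is needed to make $\theta$ a ring homomorphism. In the classical setting of Theorem \ref{theorem:U1cohomologyringstructure} this step was free, because $\Dist(U_1)$ is a Hopf algebra and so $\opH^\bullet(U_1, k)$ is graded-commutative, which makes $\calA$ (in even degrees) automatically central; here $\uzu$ is only a left coideal subalgebra of $\uzb$ and $\opH^\bullet(\uzu, \C)$ should not be expected to be graded-commutative in general. My approach is first to observe that for each polynomial generator $z_i \in \calA$ of $X$-weight $-\ell \gamma_i$ and each basis vector $f_{\Phi(w)} \in \calB$ of weight $w \cdot 0$, the weight space of $\opH^\bullet(\uzu, \C)$ of weight $-\ell\gamma_i + w \cdot 0$ is forced to be one-dimensional by Lemma \ref{lemma:dotactionmodell} combined with Theorem \ref{theorem:uzucohomology}, so that $z_i f_{\Phi(w)}$ and $f_{\Phi(w)} z_i$ are automatically proportional; then to show the proportionality constant is $1$ by realizing each $z_i$ as a transgression class in the LHS spectral sequences constructed in the proof of Proposition \ref{proposition:weightspaceinduction}, where $z_i$ is central in the cohomology of the sub-bialgebra $A_i \subset \cuz$. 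Once $\calA$ is known to be central, $\theta$ is an algebra homomorphism between graded pieces of the same dimension (by Theorem \ref{theorem:uzucohomology}), and an induction on filtered degree exactly as in the proof of Theorem \ref{theorem:U1cohomologyringstructure} finishes the argument.
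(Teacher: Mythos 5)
Your proposal follows the paper's proof almost step for step: the same $\calA = \opH^\bullet(\uzu,\C)^{\uzo} \cong \opH^\bullet(\uzb,\C)$, the same $\calB$ (the span of the $(w\cdot 0)$-weight spaces, i.e.\ the preimage of $1 \otimes \opH^\bullet(\cuzu,\C)$ under the decomposition of Theorem \ref{theorem:uzucohomology}), the same appeal to Lemma \ref{lemma:sumofdotactionmodp} with $\ell$ in place of $p$ to see that $\calB$ is closed under multiplication, and the same identification of $\calB$ with $\opH^\bullet(\cuzu,\C)$ as rings via the edge (inflation) map of the LHS spectral sequence for $\calZ^+ \subset \cuzu$, with Lemma \ref{lemma:dotactionmodell} forcing the $(w\cdot 0)$-weight spaces to live entirely on the bottom row. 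All of that is correct and is exactly what the paper does.

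The one place you diverge is the centrality of $\calA$, which you rightly single out as the main obstacle, but your proposed resolution has a gap. Step (a) is fine: by Lemma \ref{lemma:dotactionmodell} the weight space of weight $w\cdot 0 - \ell\gamma_i$ in cohomological degree $\ell(w)+2$ is one-dimensional, so $z_i f_{\Phi(w)}$ and $f_{\Phi(w)} z_i$ are proportional. But step (b) does not close the loop: the centrality of $z_i$ in $\opH^\bullet(A_i,\C)$ is centrality inside the commutative ring $\opH^\bullet(\uzu,\C)^{\uzo}$ (indeed $\opH^\bullet(A_N,\C)=\opH^\bullet(\uzb,\C)$ is a polynomial ring), and says nothing about how $z_i$ commutes with the classes $f_{\Phi(w)}$, which lie outside the $\uzo$-invariants; nor does the transgression description of $z_i$ in the spectral sequences of Proposition \ref{proposition:weightspaceinduction} interact with the Yoneda product on all of $\opH^\bullet(\uzu,\C)$ in any evident way. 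The paper settles this point directly: because $\uzb$ is a bialgebra and $\uzu$ a left coideal subalgebra, the left and right cup-product actions of $\opH^\bullet(\uzb,\C)$ on $\opH^\bullet(\uzu,\C)$ agree up to the Koszul sign by \cite[VIII.4]{Mac-Lane:1995}, and since $\opH^{\textup{odd}}(\uzb,\C)=0$ the sign is trivial, so the restriction of $\opH^\bullet(\uzb,\C)$ is a central subalgebra. If you replace your step (b) by this sign argument (or by any argument pinning the proportionality constant to $1$), the rest of your proof goes through as written.
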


\begin{proof}
By Theorem \ref{theorem:uzucohomology}, there exists an isomorphism of $\Uzo$- and $\Hbul(\uzb,\C)$-modules
\begin{equation} \label{eq:uzudecomposition}
\opH^\bullet(\uzu,\C) \cong \opH^\bullet(\uzb,\C) \otimes \opH^\bullet(\cuzu,\C),
\end{equation}
Also, $\Hbul(\uzb,\C) \cong S^{\bullet/2}(\fraku^*)^{(1)}$ as $\Uzo$-modules by Corollary \ref{corollary:freeweightspace}. Consider the LHS spectral sequence of $\Uzo$-modules:
\[
E_2^{a,b} = \opH^a(\cuzu//\calZ^+,\opH^b(\calZ^+,\C)) \Rightarrow \opH^{a+b}(\cuzu,\C).
\]
Since $\calZ^+$ is central in $\cuzu$ and $\cuzu//\calZ^+ \cong \uzu$, the spectral sequence may be rewritten as
\begin{equation} \label{eq:LHSspecseqdecomposition}
E_2^{a,b} = \opH^b(\calZ^+,\C) \otimes \opH^a(\uzu,\C) \Rightarrow \opH^{a+b}(\cuzu,\C).
\end{equation}
Moreoever, $\opH^\bullet(\calZ^+,\C) \cong \Lambda^\bullet(\fraku^*)^{(1)}$ as a $\Uzo$-module. We claim that the edge map $\opH^\bullet(\uzu,\C) \rightarrow \opH^\bullet(\cuzu,\C)$ of \eqref{eq:LHSspecseqdecomposition} is surjective. Indeed, by \cite[Theorem 6.4.1]{UGA:2010}, $\opH^\bullet(\cuzu,\C)$ decomposes as a direct sum of one-dimensional $\Uzo$-modules, $\opH^\bullet(\cuzu,\C) \cong \bigoplus_{w \in W} \C_{w \cdot 0}$. Fix $w \in W$, and suppose $w \cdot 0$ occurs as a weight of $\Uzo$ in $E_2^{a,b}$. Then by \eqref{eq:uzudecomposition} and \eqref{eq:LHSspecseqdecomposition}, $w \cdot 0 = w' \cdot 0 + \ell(\sigma_1 + \sigma_2)$ for some $w' \in W$ and some weights $\sigma_1$ of $\Lambda^\bullet(\fraku^*)$ and $\sigma_2$ of $S^\bullet(\fraku^*)$. In particular, $\sigma_1,\sigma_2 \in \N \Phi^-$. Then Lemma \ref{lemma:dotactionmodell} implies that $\sigma_1 + \sigma_2 = 0$, and hence that $\sigma_1 = \sigma_2 = 0$. This is only possible if $b=0$, so it follows that the edge map $\opH^\bullet(\uzu,\C) \rightarrow \opH^\bullet(\cuzu,\C)$ is surjective, and that under the $\Uzo$-module isomorphism of \eqref{eq:uzudecomposition}, the horizontal edge map in \eqref{eq:LHSspecseqdecomposition} is the projection onto the subspace
\[ 1 \otimes \opH^\bullet(\cuzu,\C) \subset \opH^\bullet(\uzb,\C) \otimes \opH^\bullet(\cuzu,\C).
\]

Now let $\calB \subset \opH^\bullet(\uzu,\C)$ be the $\Uzo$-submodule corresponding to the subspace $1 \otimes \opH^\bullet(\cuzu,\C)$ under the isomorphism \eqref{eq:uzudecomposition}. Considering the weights of $\calB$ and arguing as in the proof of Theorem \ref{theorem:U1cohomologyringstructure}, we see that $\calB$ is in fact a subalgebra of $\opH^\bullet(\uzu,\C)$. (In particular, Lemma \ref{lemma:sumofdotactionmodp} remains valid with the same proof if $\ell$ is substituted for $p$.) Since the edge map $\opH^\bullet(\uzu,\C) \rightarrow \opH^\bullet(\cuzu,\C)$ maps $\calB$ isomorphically onto $\opH^\bullet(\cuzu,\C)$, we conclude that $\calB \cong \opH^\bullet(\cuzu,\C)$ as algebras.

Recall that $\opH^\bullet(\uzb,\C)$ identifies with the subalgebra $\opH^\bullet(\uzu,\C)^{\uzo}$ of $\opH^\bullet(\uzu,\C)$. The inclusion $\opH^\bullet(\uzb,\C) \hookrightarrow \opH^\bullet(\uzu,\C)$ is just the restriction map in cohomology. Then from \eqref{eq:uzudecomposition} we see that $\opH^\bullet(\uzu,\C)$ is generated as an algebra by $\calB$ together with $\opH^\bullet(\uzb,\C)$. Finally, it follows from \cite[VIII.4]{Mac-Lane:1995} and the fact that $\opH^{\textup{odd}}(\uzb,\C) = 0$ that $\opH^\bullet(\uzb,\C)$ is a central subalgebra of $\opH^\bullet(\uzu,\C)$. We conclude that multiplication in $\opH^\bullet(\uzu,\C)$ induces an isomorphism of algebras
\[
\opH^\bullet(\uzb,\C) \otimes \opH^\bullet(\cuzu,\C) \cong \opH^\bullet(\uzu,\C)^{\uzo} \otimes \calB \stackrel{\sim}{\longrightarrow} \opH^\bullet(\uzu,\C). \qedhere
\]
\end{proof}

\subsection{Parabolic computations}

For completeness, we now state the quantum analog of the parabolic computations in Section \ref{section:paraboliccohomology}. Before stating the main result, we point out that the cohomology space $\opH^\bullet(\uzpj,\Lz(\lambda))$ is naturally a rational $\Dist(P_J)$-module (equivalently, a rational $P_J$-module). The $\Dist(P_J)$-module structure arises as follows. First, the right adjoint action of $\Uzpj$ on itself stabilizes the subalgebra $\uzpj \subset \Uzpj$. This gives rise to a natural action of $\Uzpj$ on $\opH^\bullet(\uzpj,\Lz(\lambda))$, which factors through the quotient $\Dist(P_J) \cong \Uzpj//\uzpj$.

\begin{theorem}
Assume that $\ell$ is odd, that $\ell$ is coprime to $n+1$ if $\Phi$ has type $A_n$, and that $\ell$ is coprime to $3$ if $\Phi$ has type $E_6$ or $G_2$. Let $\lambda \in X^+ \cap \CZbar$ (so $\ell > h$), and let $J \subseteq \Delta$.
\begin{enumalph}
\item If $\opH^\bullet(\uzpj,\Lz(\lambda)) = 0$ unless $\lambda$ is weakly $\ell$-linked to zero.
\item If $\lambda = w \cdot 0 + \ell \sigma$ for some $w \in W$ and $\sigma \in X$, then $\lambda = 0$ or $\sigma$ is minuscule.
\item Suppose $\lambda = w \cdot 0 + \ell \sigma$. Then there exists a $P_J$-module isomorphism
\[
\opH^j(\uzpj,\Lz(\lambda)) \cong
\begin{cases}
\ind_B^{P_J} [ S^{\frac{j-\ell(w)}{2}}(\fraku^*)\otimes w^{-1}\sigma] & \text{if } j\equiv l(w)\ \text{mod }2, \\
0 & \text{otherwise.}
\end{cases}
\]
\end{enumalph}
\end{theorem}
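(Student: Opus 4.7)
The strategy parallels the proof of Theorem \ref{theorem:alggrpparabolic} in the quantum setting, substituting each algebraic-group ingredient by its quantum counterpart. For part (a), apply the Linkage Principle for the small quantum Levi $\uzlj$ to the LHS spectral sequence associated with the normal Hopf subalgebra $\uzuj \triangleleft \uzpj$: if $\lambda$ is not weakly $\ell$-linked to zero, then no $\uzlj$-composition factor of $\opH^\bullet(\uzuj,\Lz(\lambda))$ lies in the principal block of $\uzlj$, so the $\uzpj$-cohomology vanishes. For part (b), run the proof of Lemma \ref{l1}(a) with $p$ replaced by $\ell$; that argument only used the bound $h-1 < p$, and our standing hypothesis $\ell > h$ preserves the needed inequality, while uniqueness of $(w,\sigma)$ follows from Lemma \ref{lemma:dotactionfaithful}.

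For part (c), first compute $\opH^\bullet(\uzb,\Lz(\lambda))$ by taking $\uzo$-invariants in the $\Uzb$-module isomorphism from Theorem \ref{theorem:Uzbiso}. Applied with $J = \emptyset$, Theorem \ref{theorem:quantumkostant} gives $\opH^\bullet(\cuzu,\Lz(\lambda)) \cong \bigoplus_{w' \in W} \C_{w' \cdot \lambda}$, and combining Lemmas \ref{lemma:dotactionfaithful} and \ref{lemma:dotactionmodell} (the quantum analog of Lemma \ref{l1}(b)) shows that only the summand for $w' = w^{-1}$ survives the $\uzo$-invariants, contributing a one-dimensional piece of weight $w^{-1}\sigma$ concentrated in degree $\ell(w)$. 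Combined with $\opH^\bullet(\uzb,\C) \cong S^\bullet(\fraku^*)^{(1)}$ from Corollary \ref{corollary:freeweightspace}, this yields, for $j \equiv \ell(w) \pmod{2}$,
\[
\opH^j(\uzb,\Lz(\lambda)) \cong S^{(j-\ell(w))/2}(\fraku^*) \otimes w^{-1}\sigma,
\]
and zero otherwise, viewed as a rational $B$-module via $\Uzb//\uzb \cong \Dist(B)$.

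Next, set up the quantum Grothendieck spectral sequence
\[
E_2^{i,j} = R^i \ind_B^{P_J} \opH^j(\uzb,\Lz(\lambda)) \Rightarrow \opH^{i+j}(\uzpj,\Lz(\lambda))
\]
as rational $P_J$-modules. Verify that $w^{-1}\sigma$ is antidominant by the same inner-product argument used in the proof of Theorem \ref{theorem:alggrpparabolic}: the constraints $(w^{-1}\cdot\lambda,\alpha^\vee) \in \ell\Z$ (from $\lambda = w\cdot 0 + \ell\sigma$) and $(w^{-1}\cdot\lambda,\alpha^\vee) < \ell$ (from $\lambda \in \CZbar$) force $(w^{-1}\sigma,\alpha^\vee) \leq 0$ for all $\alpha \in \Delta$. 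Then decompose $\fraku = (\fraku \cap \frakl_J) \oplus \fraku_J$, factor $S^\bullet(\fraku^*)$ accordingly, apply the generalized tensor identity, and invoke \cite[Theorem 2]{KLT:1999} to conclude that $R^i \ind_{B \cap L_J}^{L_J}[S^\bullet((\fraku \cap \frakl_J)^*) \otimes w^{-1}\sigma] = 0$ for $i > 0$. The resulting collapse at the $E_2$-page yields the claimed $P_J$-module isomorphism.

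The principal obstacle is rigorously constructing the quantum Grothendieck spectral sequence above. Classically this follows routinely from \cite[II.12.2]{Jan:2003} via the normality of $B_1$ in $P_J$; in the quantum setting one must track the rational $P_J$-structure on $\opH^\bullet(\uzpj,\Lz(\lambda))$, which arises from the adjoint action of $\Uzpj$ factoring through $\Uzpj//\uzpj \cong \Dist(P_J)$, through a compatibility between $\uzpj$-cohomology and $\ind_B^{P_J}$ at the level of derived functors on rational $P_J$-modules, most likely by factoring the induction from the ambient quantum parabolic through $B$-induction composed with $\uzb$-cohomology.
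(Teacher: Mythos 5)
Your proposal is correct and follows exactly the route the paper intends: the paper's own proof of this theorem is literally the one-line statement that it is analogous to Theorem \ref{theorem:alggrpparabolic}, and your outline fills in precisely that analogy (quantum Kostant, the $\uzo$-weight-space computation of $\opH^\bullet(\uzb,\Lz(\lambda))$, antidominance of $w^{-1}\sigma$, Mackey imprimitivity plus the tensor identity, and the Kumar--Lauritzen--Thomsen vanishing). You also correctly identify the one place where the analogy is not automatic --- the quantum substitute for the spectral sequence of \cite[II.12.2]{Jan:2003} --- which the paper itself leaves implicit.
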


\begin{proof}
The proof is analogous to that given for Theorem~\ref{theorem:alggrpparabolic}, so the details are omitted.
\end{proof}

\section*{Acknowledgments}

The authors gratefully acknowledge the assistance of Jon Carlson in completing computer calculations in MAGMA for this project.


\providecommand{\bysame}{\leavevmode\hbox to3em{\hrulefill}\thinspace}

\end{document}